\theoremstyle{plain}
\newtheorem{theorem}{Theorem}
\newtheorem{lemma}{Lemma}
\theoremstyle{proof}
\theoremstyle{definition}
\theoremstyle{remark}
\theoremstyle{lamma}
\numberwithin{equation}{section}
\numberwithin{lemma}{section}
\numberwithin{theorem}{section}
\theoremstyle{thmrm}
\begin{document}
\title[Class numbers of quadratic fields]{Divisibility of class numbers of certain families of quadratic fields}
\author{Azizul Hoque and Kalyan Chakraborty}
\address{Azizul Hoque @Azizul Hoque Harish-Chandra Research Institute,
Chhatnag Road, Jhunsi,  Allahabad 211 019, India.}
\email{ azizulhoque@hri.res.in}

\address{Kalyan Chakraborty @Kalyan Chakraborty, Harish-Chandra Research Institute,
Chhatnag Road, Jhunsi, Allahabad 211 019, India.}
\email{kalyan@hri.res.in}

\keywords{Quadratic fields, Discriminant, Class number, Hilbert class field}
\subjclass[2010] {Primary: 11R29, Secondary: 11R11}
\maketitle
\begin{abstract}
We construct some families of quadratic fields whose class numbers are divisible by $3.$ The main tools used are a trinomial introduced by Kishi and a parametrization of Kishi and Miyake of a family of quadratic fields whose class numbers are divisible by $3.$ At the end we compute class number of these fields for 
some small values and verify our results. 
\end{abstract}

\section{Introduction}
The ideal class group or more precisely the class number of number fields is one of the most fundamental and mysterious objects associated with the field extensions.
Starting from Gauss, this area has attracted the attention of many researchers. It is well-known that there exist infinitely many quadratic fields
each with class number divisible by a given integer $g\geq 2.$ In particular, Nagell \cite{NA22} proved that there are infinitely many imaginary quadratic fields with class 
number divisible by a given integer $g\geq 2.$ On the other hand, for real quadratic field case, Honda \cite{HO68}, Yamamoto \cite{YA70}, Weinberger \cite{WE73} and Ichimura \cite{IC03} 
independently proved that there are infinitely many real quadratic fields each with class number divisible by a given integer $g\geq 2.$ In  recent years, the study is concentrating on  
characterising  such fields, i.e. each with class number divisible by a given integer $g\geq 2$. In this direction, Kishi and Miyake \cite{KM00} gave a parametrization of 
quadratic fields with class number divisible by $3$ and this enabled enlargement of  the list of families of quadratic fields with the divisibility properties. Chakraborty and
Murty \cite{CM03}, Kishi \cite{KI10}, Hoque and Saikia \cite{HS162} contributed some members to this list. 

In this paper we provide some infinite, simply parametrized new families of 
quadratic fields  with class number divisible by $3$. More precisely, we show that under certain conditions on the integers $a, b, m, n, p$ and $r$,
the class numbers of the fields $ \mathbb{Q}(\sqrt{3(4m^{3n}-k^2)})$, $\mathbb{Q}(\sqrt{-(m^2n^2\pm 4n)/3})$, $ \mathbb{Q}(\sqrt{-(3^mp^{2n}+r)})$, \hspace*{2mm} 
$\mathbb{Q}(\sqrt{3(a^{3n}-b^{2n})})$,\hspace*{2mm} $\mathbb{Q}(\sqrt{3(4a^{3n}-b^{2n})})$, \\ $\mathbb{Q}(\sqrt{-3(4m^3+1)})$, $\mathbb{Q}(\sqrt{3(2m^{3n}-1)})$
and $\mathbb{Q}(\sqrt{1-2m^3})$ are divisible by $3.$ We begin by fixing some notations.

{\bf{Notations}}: For a number field $K,$ $\Delta_K$ and $\mathcal{O}_K$ denote the discriminant and the ring of integers of $K,$ respectively. We denote by $N_{K/\mathbb{Q}}$ and $T_{K/\mathbb{Q}}$ the norm and trace map of a number field $K,$ respectively. For a non-square integer $d$, $h(d)$ denotes the class number of $\mathbb{Q}(\sqrt{d})$. For a prime number $p$ and an integer $n,$ $\mathit{v}_p (n)$ denotes the greatest exponent $\mu$ of $p$ such that $p^\mu \mid n$. For a polynomial $f,$ $S_{\mathbb{Q}}(f)$ denotes the splitting field of $f$ over $\mathbb{Q}.$

\section{Kishi's trinomial and class numbers of quadratic fields}
We begin with some lemmas and then recall a criterion for an extension to be unramified. Finally, we construct some 
quadratic fields each with class number divisible by $3$ using Kishi's trinomial.

Let $\alpha \in \mathcal{O}_K$ with $N_{K/\mathbb{Q}}(\alpha)\in \mathbb{Z}^3$ and
\begin{equation}\label{eq1}
 f_\alpha(X):=X^3-3[N_{K/\mathbb{Q}} (\alpha)]^{1/3}X-T_{K/\mathbb{Q}}(\alpha)
\end{equation}
The trinomial $f_\alpha(X)$ was introduced by Kishi in \cite{KI00}. 
We recall the following result of Kishi \cite{KI98}.
\begin{lemma}\label{lma2.1}
 Let $K=\mathbb{Q}(\sqrt{d}).$ Suppose $\alpha=\frac{a+b\sqrt{d}}{2} \in {\mathcal{O}}_K$ 
 with
 $N_{K/\mathbb{Q}}(\alpha)\in \mathbb{Z}^3.$ Then $f_{\alpha} (X)$  
 is 
 reducible over $\mathbb{Q}$ if and only if $\alpha$ is a cube in $K.$
\end{lemma}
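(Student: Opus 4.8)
The plan is to make the trinomial transparent by first simplifying notation: since $N_{K/\mathbb{Q}}(\alpha)\in\mathbb{Z}^3$, write $c:=[N_{K/\mathbb{Q}}(\alpha)]^{1/3}\in\mathbb{Z}$ and $a:=T_{K/\mathbb{Q}}(\alpha)$, so that $f_\alpha(X)=X^3-3cX-a$ is a depressed cubic. Because $f_\alpha$ is monic of degree $3$, it is reducible over $\mathbb{Q}$ if and only if it has a root in $\mathbb{Q}$, so the whole statement reduces to deciding when $f_\alpha$ has a rational root. The key observation I would exploit is that applying Cardano's substitution $X=u+v$ forces $uv=c$ and $u^3+v^3=a$, whence $u^3$ and $v^3$ are the two roots of $t^2-at+c^3=t^2-T_{K/\mathbb{Q}}(\alpha)\,t+N_{K/\mathbb{Q}}(\alpha)$. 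This resolvent quadratic is exactly the minimal polynomial of $\alpha=\tfrac{a+b\sqrt d}{2}$ over $\mathbb{Q}$ (trace $a$, norm $N_{K/\mathbb{Q}}(\alpha)$), so $\{u^3,v^3\}=\{\alpha,\bar\alpha\}$, where $\bar{\phantom{x}}$ denotes the nontrivial automorphism of $K$. Thus every root of $f_\alpha$ has the shape $\omega^j\alpha^{1/3}+\omega^{2j}\bar\alpha^{1/3}$ for a fixed pair of cube roots with product $c$ and $\omega$ a primitive cube root of unity; this explicit description drives both implications.

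For the ($\Leftarrow$) direction, suppose $\alpha=\beta^3$ with $\beta\in K$. Since $\beta$ is a root of $X^3-\alpha$ with coefficients in $\mathcal{O}_K$ and $\mathcal{O}_K$ is integrally closed in $K$, we get $\beta\in\mathcal{O}_K$. Then $N_{K/\mathbb{Q}}(\beta)^3=N_{K/\mathbb{Q}}(\alpha)=c^3$, so $N_{K/\mathbb{Q}}(\beta)=c$, and I would simply verify by expanding $(\beta+\bar\beta)^3$ and using $\beta\bar\beta=c$ that $r:=T_{K/\mathbb{Q}}(\beta)\in\mathbb{Z}$ satisfies $r^3-3cr-a=0$. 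Hence $f_\alpha$ has the rational root $r$ and is reducible.

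For the ($\Rightarrow$) direction, which I expect to be the main obstacle, assume $f_\alpha$ has a rational root $r$ and write $r=u+v$ with $uv=c$. Then $u,v$ are the roots of $Y^2-rY+c$, so they lie in $F:=\mathbb{Q}(\sqrt{r^2-4c})$, and after matching cube roots compatibly we may take $u^3=\alpha$, $v^3=\bar\alpha$. The delicate point is to show that $u$ actually lies in $K$ rather than in some unrelated quadratic field: since $b\neq 0$ the element $\alpha$ is not rational, so $\mathbb{Q}(\alpha)=K$ is genuinely quadratic; but $\alpha=u^3\in F$ with $[F:\mathbb{Q}]\le 2$ forces $K=\mathbb{Q}(\alpha)\subseteq F$ and hence $F=K$, giving $u\in K$ and $\alpha=u^3$ a cube in $K$. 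The subtleties to handle with care are the consistent choice of cube roots so that their product equals the rational number $c$, and the degenerate case $b=0$ (where $\alpha\in\mathbb{Q}$), which can be dispatched separately since then $f_\alpha$ reduces to an elementary rational computation.
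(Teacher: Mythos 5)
The paper never proves this lemma: it is recalled verbatim from Kishi \cite{KI98}, so there is no internal argument to compare yours against; your proposal has to stand on its own, and it does. The reduction to the existence of a rational root (a monic cubic over $\mathbb{Q}$ is reducible iff it has a rational root) is valid; the Cardano decomposition correctly identifies $\{u^3,v^3\}$ with $\{\alpha,\bar\alpha\}$ via the resolvent quadratic $t^2-T_{K/\mathbb{Q}}(\alpha)t+N_{K/\mathbb{Q}}(\alpha)$; the ($\Leftarrow$) computation, namely that $r=T_{K/\mathbb{Q}}(\beta)$ satisfies $r^3=T_{K/\mathbb{Q}}(\alpha)+3N_{K/\mathbb{Q}}(\beta)\,r$, is exactly right (with $N_{K/\mathbb{Q}}(\beta)=c$ following from uniqueness of rational cube roots); and in the ($\Rightarrow$) direction the degree argument $K=\mathbb{Q}(\alpha)\subseteq F=\mathbb{Q}(\sqrt{r^2-4c})$, forcing $F=K$, is the correct way to land the cube root inside $K$ rather than in an unrelated quadratic field. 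Two small remarks. First, the worry you flag about a ``consistent choice of cube roots'' is vacuous in your own formulation: you define $u,v$ as the two roots of $Y^2-rY+c$, so $uv=c$ holds by construction and no cube roots are ever extracted; the identity $\{u^3,v^3\}=\{\alpha,\bar\alpha\}$ then follows purely from symmetric functions (for $b\neq 0$ the resolvent has distinct roots, so the identification is legitimate). Second, the degenerate case $b=0$ is only sketched and should be written out, but it is one line: then $\alpha\in\mathbb{Z}$ and $N_{K/\mathbb{Q}}(\alpha)=\alpha^2\in\mathbb{Z}^3$ forces every prime exponent of $\alpha$ to be divisible by $3$, so $\alpha=s^3$ with $s\in\mathbb{Z}$, whence $\alpha$ is a cube and $f_\alpha(X)=X^3-3s^2X-2s^3=(X+s)^2(X-2s)$ is reducible, so both sides of the equivalence hold simultaneously. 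With that line added, your proof is complete, and it has the virtue of making the paper self-contained on a point it otherwise outsources to \cite{KI98}.
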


Let $d(\ne 1, -3)$ be a square-free integer and
$$
\begin{displaystyle}
D = 
\begin{cases}
-d/3 \hspace*{2mm}\text{if } d \text{ is a multiple of  } 3, \\
  -3d \hspace*{4mm}\text{otherwise}. 
  \end{cases}
  \end{displaystyle}
$$
Let 
$K=\mathbb{Q}(\sqrt{d})$ and $L=\mathbb{Q}(\sqrt{D})$. Also,
$$
R_d:=\{\alpha \in\mathcal{O}_K: \alpha \text{ is not  a  cube in } K \text{ and} \ N_{K/\mathbb{Q}}(\alpha)\text{ is a  cube in } \mathbb{Z}\}
$$
and
$$
R_D:=\{\alpha \in\mathcal{O}_L: \alpha \text{ is not a cube in } L \text{ and } N_{L/\mathbb{Q}}(\alpha)\text{ is a cube in } \mathbb{Z}\}.
$$
It is clear that the subset $R_d$ (respectively $R_D$) contains all those units in $K$ which are not cubes in $K$ (respectively in $L$). Further let,
$$
R^*_d:=\{\alpha\in R_d: \gcd(N_{K/\mathbb{Q}}(\alpha), T_{K/\mathbb{Q}}(\alpha))=1\}
$$
and 
$$
R^*_D:=\{\alpha\in R_D: \gcd(N_{L/\mathbb{Q}}(\alpha), T_{L/\mathbb{Q}}(\alpha))=1\}.
$$
We can now recall a result of Kishi (\cite{KI00}, Proposition 6.5) which is one of the main ingredient for deriving our results.
\begin{lemma}\label{lma2.3}
 Let $\alpha \in R^*_D$ (resp. $\alpha \in R^*_d$). Then $S_{\mathbb{Q}}(f_\alpha)$ is an $S_3$-field containing $K=\mathbb{Q}(\sqrt{d})$ (resp. $L=\mathbb{Q}(\sqrt{D})$) which is a cyclic cubic extension of $K$ (resp. $L$) unramified outside $3$ and contains a cubic subfield $K'$ with $\mathit{v}_3(\Delta_{K'})\ne 5.$ Conversely, every $S_3$-field containing $K$ (resp. $L$) which is unramified outside $3$ over $K$ (resp. $L$) and contains a cubic subfield $K'$ satisfying $\mathit{v}_3(\Delta_{K'})\ne 5$ is given by $S_{\mathbb{Q}}(f_\alpha)$ with $\alpha \in R^*_D$ (resp. $\alpha \in R^*_d$). 
 \end{lemma}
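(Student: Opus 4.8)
The plan is to treat the case $\alpha \in R^*_D$; the parenthetical case $\alpha \in R^*_d$ is identical after interchanging the roles of $d$ and $D$, which are defined symmetrically from one another. Write $\alpha = \frac{a+b\sqrt D}{2} \in \mathcal O_L$, so that $N_{L/\mathbb Q}(\alpha) = \frac{a^2 - Db^2}{4}$ is a cube, say $c^3$ with $c \in \mathbb Z$, while $T_{L/\mathbb Q}(\alpha)=a$; thus $f_\alpha(X) = X^3 - 3cX - a$. First I would settle irreducibility: since $\alpha \in R^*_D \subseteq R_D$ is not a cube in $L$, Lemma~\ref{lma2.1} (applied with $L$ in place of $K$) shows that $f_\alpha$ is irreducible over $\mathbb Q$, so $S_{\mathbb Q}(f_\alpha)$ has degree $3$ or $6$ and is an $S_3$-field exactly when $\mathrm{disc}(f_\alpha)$ is not a square.

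Next I would compute the discriminant. From $\mathrm{disc}(X^3+pX+q) = -4p^3 - 27q^2$ with $p=-3c$, $q=-a$, together with $4c^3 = a^2 - Db^2$, one obtains
\[
\mathrm{disc}(f_\alpha) = 27(4c^3 - a^2) = -27Db^2 .
\]
Feeding in the two branches of the definition of $D$ gives $\mathrm{disc}(f_\alpha) = (9b)^2 d$ when $D=-3d$ and $(3b)^2 d$ when $D=-d/3$; in either case it is a perfect square times $d$. As $d$ is square-free and $d \ne 1$, this is not a square, so $\mathrm{Gal}(S_{\mathbb Q}(f_\alpha)/\mathbb Q) = S_3$ and the fixed field of $A_3$ is $\mathbb Q(\sqrt{\mathrm{disc}(f_\alpha)}) = \mathbb Q(\sqrt d) = K$. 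Hence $S := S_{\mathbb Q}(f_\alpha) \supseteq K$ and $S/K$ is the cyclic cubic extension fixed by $A_3 \cong C_3$.

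The step I expect to be the main obstacle is the ramification claim. By the conductor--discriminant formula for the $S_3$-field $S$ one has $|\Delta_S| = |\Delta_K|\,|\Delta_{K'}|^2$, which with the tower formula $\Delta_S = N_{K/\mathbb Q}(\mathfrak d_{S/K})\,\Delta_K^3$ gives $N_{K/\mathbb Q}(\mathfrak d_{S/K}) = (\Delta_{K'}/\Delta_K)^2$. It therefore suffices to prove $\mathit{v}_\ell(\Delta_{K'}) = \mathit{v}_\ell(\Delta_K)$ for every prime $\ell \ne 3$. Since $\Delta_{K'} \mid \mathrm{disc}(f_\alpha) = -27Db^2$, only $\ell \mid Db$ is relevant, and I would analyse these by reducing $f_\alpha$ modulo $\ell$. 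Here the coprimality built into $R^*_D$, namely $\gcd(c^3,a)=1$, is decisive: for $\ell \ne 3$ a triple root $(X-r)^3$ of $f_\alpha \bmod \ell$ would force $r \equiv 0$ and then $\ell \mid c$ and $\ell \mid a$, which is impossible, so $\ell$ is never totally ramified in $K'$ and $\mathit{v}_\ell(\Delta_{K'}) \in \{0,1\}$. A parity count from $\mathit{v}_\ell(-27Db^2)$ then shows this valuation equals $1$ exactly when $\ell \mid D$ (so $\ell \mid d$, whence $\ell$ ramifies in $K$ and $\mathit{v}_\ell(\Delta_K)=1$) and $0$ otherwise, matching $\mathit{v}_\ell(\Delta_K)$ in both cases. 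Thus $\mathfrak d_{S/K}$ is supported only above $3$. The finer assertion $\mathit{v}_3(\Delta_{K'}) \ne 5$ I would obtain from a local analysis of $f_\alpha$ over $\mathbb Q_3$ (via the Newton polygon of $f_\alpha$ and Ore's conditions), where the same coprimality confines $\mathit{v}_3(\Delta_{K'})$ to the admissible set $\{0,1,3,4\}$ and rules out the maximal wild value $5$.

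For the converse I would start from an arbitrary $S_3$-field $M \supseteq K$ that is unramified outside $3$ over $K$ and has a cubic subfield $K'$ with $\mathit{v}_3(\Delta_{K'}) \ne 5$. Writing $K' = \mathbb Q(\theta)$ and normalizing $\theta$ by an integral translation and scaling so that its minimal polynomial is depressed, $X^3 - 3cX - t$, I would read off $N = c^3$ and $T = t$ and realize them as $N_{L/\mathbb Q}(\alpha)$ and $T_{L/\mathbb Q}(\alpha)$ for a suitable $\alpha \in \mathcal O_L$; such an $\alpha$ is not a cube in $L$ by Lemma~\ref{lma2.1} (as $M/\mathbb Q$ is $S_3$) and has cube norm by construction, so $\alpha \in R_D$ and $M = S_{\mathbb Q}(f_\alpha)$. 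The one remaining point is to choose the normalization so that in addition $\gcd(N,T)=1$, i.e. $\alpha \in R^*_D$; this is exactly where $\mathit{v}_3(\Delta_{K'}) \ne 5$ enters, since the excluded valuation is precisely the obstruction to removing the common factor $3$ from $(N,T)$ while keeping $\theta$ integral. I expect this normalization bookkeeping, alongside the $3$-adic ramification analysis above, to be the most delicate parts of the whole argument.
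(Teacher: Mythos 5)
The paper itself contains no proof of this lemma: it is quoted verbatim as Kishi's Proposition~6.5 from \cite{KI00}, so your argument has to stand entirely on its own, and as written it does not. The genuine gap is your treatment of the prime $\ell=2$ in the forward direction. The step ``$\ell$ not totally ramified in $K'$ $\Rightarrow$ $\mathit{v}_\ell(\Delta_{K'})\in\{0,1\}$'' is a tameness statement and fails at $\ell=2$: a partially ramified $2=\mathfrak{p}^2\mathfrak{q}$ in a cubic field is wildly ramified and has $\mathit{v}_2(\Delta_{K'})\in\{2,3\}$. Likewise your rule ``$\mathit{v}_\ell(\Delta_{K'})=1$ exactly when $\ell\mid D$, else $0$, matching $\mathit{v}_\ell(\Delta_K)$'' misreads the quadratic discriminant at $2$, since $\Delta_K=4d$ when $d\not\equiv 1\pmod 4$, so $\mathit{v}_2(\Delta_K)$ takes the values $0,2,3$ and is never $1$. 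A concrete counterexample to both intermediate claims: $\alpha=4+\sqrt{15}\in R^*_D$ with $D=15$, $d=-5$ (norm $1$, trace $8$, $f_\alpha=X^3-3X-8$ irreducible, $\gcd(N,T)=1$). Here $\mathrm{disc}(f_\alpha)=-1620=18^2\cdot(-5)$, and Dedekind's criterion at $2$ shows $2\nmid[\mathcal{O}_{K'}:\mathbb{Z}[\theta]]$, hence $\mathit{v}_2(\Delta_{K'})=2$ even though $2\nmid D$, where your rule predicts $0$. The equality $\mathit{v}_2(\Delta_{K'})=\mathit{v}_2(\Delta_K)$ that you need does hold here (both equal $2$), but nothing in your parity count forces it: once wild ramification is admitted, the parity argument only confines both valuations to $\{0,2\}$ and cannot match them, so unramifiedness of $S/K$ above $2$ remains unproved.

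The way Kishi (following Yamamoto) actually closes this is Kummer-theoretic and bypasses $\Delta_{K'}$ entirely: $\gcd(N_{L/\mathbb{Q}}(\alpha),T_{L/\mathbb{Q}}(\alpha))=1$ makes the ideals $(\alpha)$ and $(\bar{\alpha})$ coprime with product $(c)^3$, whence $(\alpha)=\mathfrak{a}^3$ as ideals of $\mathcal{O}_L$; consequently the cubic Kummer extension $M(\sqrt[3]{\alpha})/M$, where $M=\mathbb{Q}(\sqrt{d},\sqrt{D})=K(\sqrt{-3})$, is unramified at every prime of $M$ not dividing $3$, and since $[M:K]=2$ is prime to $3$, this descends to the statement that $S/K$ is unramified outside $3$ --- uniformly in $\ell$, including $\ell=2$. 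Beyond this, two further parts of the statement are only sketched in your proposal: the claim $\mathit{v}_3(\Delta_{K'})\ne 5$ (asserted via an unexecuted Newton-polygon/Ore analysis) and the entire converse, whose crux --- renormalizing a generator of $K'$ so that its norm is a cube \emph{and} coprime to its trace, which is precisely where the hypotheses ``unramified outside $3$ over $K$'' and ``$\mathit{v}_3(\Delta_{K'})\ne 5$'' must be used --- you explicitly leave undone. Your irreducibility step, the discriminant computation $\mathrm{disc}(f_\alpha)=-27Db^2$, and the identification of the quadratic resolvent $\mathbb{Q}(\sqrt{\mathrm{disc}(f_\alpha)})=K$ are all correct; the rest needs the repairs above.
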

The following result of Llorente and Nart (\cite{LN83}, Theorem 1) talks about ramification at the prime $p=3.$
 \begin{lemma}\label{lma2.5}
  Suppose that
  $$
  g(X):= X^3-aX-b\in \mathbb{Z}[X]
  $$
 is irreducible over $\mathbb{Q}$ and that either $\mathit{v}_3(a)<2$ or $\mathit{v}_3(b)<3$ holds. Let $\theta$ be a root of $g(X).$ 
 Then $3$ is totaly ramified in $\mathbb{Q}(\theta)/\mathbb{Q}$ if and only if one of the following conditions holds:
 \begin{enumerate}
  \item[(LN-1)] $1\leq \mathit{v}_3(b)\leq \mathit{v}_3(a),$
  \item[(LN-2)] $3\mid a, \ a\not\equiv3 \pmod 9, \ 3\nmid b \ and \ b^2\not\equiv a+1 \pmod 9,$
  \item[(LN-3)] $ a \equiv3 \pmod 9, \ 3\nmid b \ and \ b^2\not\equiv a+1 \pmod {27}.$
 \end{enumerate}
 \end{lemma}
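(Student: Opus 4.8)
The plan is to settle the decomposition of $3$ in $\mathbb{Q}(\theta)$ by working $3$-adically and reading off the ramification from the Newton polygon of $g$ at $3$. First I would dispose of the case $3\nmid a$: the discriminant of $g$ is $4a^3-27b^2\equiv a^3\equiv a\pmod 3$, so $3\nmid a$ makes $g\bmod 3$ separable, and Dedekind's criterion then shows $3$ is unramified, hence not totally ramified. Since each of (LN-1), (LN-2), (LN-3) forces $3\mid a$ (in (LN-1) because $v_3(a)\ge v_3(b)\ge 1$), this reduces everything to the range $3\mid a$, where $g\equiv(X-b)^3\pmod 3$ and $3$ acquires a single, possibly ramified, triple root class.

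Assuming $3\mid a$, I would translate that triple root to the origin by substituting $X=Y+c$ with $c\in\mathbb{Z}$, $c\equiv b\pmod 3$, which gives
\[
g(Y+c)=Y^3+3cY^2+(3c^2-a)Y+g(c),\qquad g(c)=c^3-ac-b,
\]
and then examine the lower convex hull of the points $(3,0)$, $(2,1+v_3(c))$, $(1,v_3(3c^2-a))$, $(0,v_3(g(c)))$. Total ramification of $3$ is equivalent to this polygon being a single segment whose slope has denominator $3$ in lowest terms: the length‑$3$ segment then keeps $g$ irreducible over $\mathbb{Q}_3$ and makes $\mathbb{Q}_3(\theta)/\mathbb{Q}_3$ totally (wildly) ramified. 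The task therefore becomes computing $v_3(g(c))$ together with the two interior valuations, and determining exactly when $\gcd(v_3(g(c)),3)=1$ holds with both interior points lying on or above the segment.

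Carrying this out splits along the dichotomy in the statement. When $3\mid b$ (take $c=0$) the polygon runs from $(0,v_3(b))$ to $(3,0)$, and $(1,v_3(a))$ sits on or above it because $v_3(a)\ge v_3(b)$; the standing hypothesis ($v_3(a)<2$ or $v_3(b)<3$) combined with $v_3(b)\le v_3(a)$ forces $v_3(b)\in\{1,2\}$, coprime to $3$, so the single segment has denominator‑$3$ slope and $3$ ramifies totally — this is exactly (LN-1). When $3\nmid b$ (take $c=b$, so $3\nmid c$) one finds $g(b)=b(b^2-a-1)$, whence $v_3(g(b))=v_3(b^2-a-1)\ge 1$; here $b^2\not\equiv a+1\pmod 9$ forces $v_3(g(b))=1$, while in the borderline case $a\equiv 3\pmod 9$ the stronger $b^2\not\equiv a+1\pmod{27}$ is what keeps $v_3(g(b))\in\{1,2\}$. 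In each case one checks that $v_3(3b^2-a)$ is large enough (it equals $1$ when $a\not\equiv 3\pmod 9$ and is $\ge 2$ when $a\equiv 3\pmod 9$) to hold the interior point above the segment; this is precisely the bifurcation (LN-2) versus (LN-3), and the splitting on $a\bmod 9$ is dictated by this jump in the valuation of the linear coefficient.

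The main obstacle is the wild ramification at $p=e=3$: exactly when a polygon side has integer slope — equivalently $3\mid v_3(g(c))$, the situation excluded by the congruences $b^2\equiv a+1$ and by failure of the normalization — a first‑order Newton polygon no longer pins down the factorization, and one must either pass to a second‑order (Ore–Montes) analysis or argue directly inside $\mathbb{Q}_3$. Thus the technical heart is verifying that the three congruence restrictions are equivalent to the polygon being a single denominator‑$3$ segment, and that in every excluded case $3$ genuinely fails to be totally ramified (it splits or is only partially ramified). As a global consistency check I would use the relation $v_3(4a^3-27b^2)=2\,v_3\!\left([\mathcal{O}_K:\mathbb{Z}[\theta]]\right)+v_3(\Delta_K)$, noting that total ramification forces $v_3(\Delta_K)\in\{3,4,5\}$, which both constrains the admissible valuations and explains the relevance of the condition $v_3(\Delta_{K'})\ne 5$ appearing in Lemma \ref{lma2.3}.
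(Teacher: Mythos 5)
First, a point of context: the paper itself offers no proof of this lemma --- it is imported verbatim as Theorem 1 of Llorente and Nart \cite{LN83}, so your attempt has to be judged as a reconstruction of that cited theorem rather than against any internal argument. Your framework (work over $\mathbb{Q}_3$ and read ramification off the Newton polygon of $g$ translated to the repeated root of $g \bmod 3$) is viable, and the parts you execute are correct: if $3\nmid a$ then $3\nmid 4a^3-27b^2$ and $3$ is unramified; if $3\mid a$ and $3\mid b$, the polygon of $g$ itself together with the standing hypothesis ($v_3(a)<2$ or $v_3(b)<3$) settles (LN-1) in both directions; and if $3\nmid b$, the translation $X=Y+b$, with $g(b)=b(b^2-a-1)$ and with $v_3(3b^2-a)=1$ or $\ge 2$ according as $a\not\equiv 3$ or $a\equiv 3 \pmod 9$, shows that (LN-2) and (LN-3) each produce a single segment of slope $-1/3$ or $-2/3$, hence total ramification.

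The genuine gap is in the necessity direction, and you half-acknowledge it yourself. Your asserted ``equivalence'' (total ramification iff single segment of denominator-$3$ slope) is only a sufficiency statement: a wildly, totally ramified cubic can have a generator whose polygon is a single segment of \emph{integer} slope, so in the excluded cases you must prove that total ramification actually fails, not merely that the polygon fails to certify it. Most excluded cases are harmless because the polygon genuinely breaks into two segments of distinct slopes, forcing $g$ to factor over $\mathbb{Q}_3$: this happens when $3\mid b$ with $v_3(b)>v_3(a)$, when $a\not\equiv 3\pmod 9$, $3\nmid b$, $b^2\equiv a+1\pmod 9$, and when $a\equiv 3\pmod 9$, $3\nmid b$, $v_3(b^2-a-1)>3$. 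But in the remaining case --- $a\equiv 3\pmod 9$, $3\nmid b$, $v_3(b^2-a-1)=3$ --- the points $(0,3),(1,\ge 2),(2,1),(3,0)$ all lie on or above the line of slope $-1$, the polygon is one segment of integer slope, and your method stalls exactly where the theorem has its content; naming Ore--Montes is not a proof. The step can in fact be carried out cleanly: set
\[
h(Z):=\frac{g(3Z+b)}{27}=Z^3+bZ^2+\frac{3b^2-a}{9}\,Z+\frac{g(b)}{27}\in\mathbb{Z}_3[Z],
\]
and note that its $Z^2$-coefficient $b$ is a $3$-adic unit, so $h\bmod 3$ cannot be a perfect cube; therefore $h\bmod 3$ is either separable, in which case $3$ is unramified in $\mathbb{Q}_3(\theta)=\mathbb{Q}_3\bigl((\theta-b)/3\bigr)$, or it has a double root distinct from its simple root, in which case Hensel's lemma splits $h$, hence $g$, into a linear times a quadratic factor over $\mathbb{Q}_3$. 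Either way $3$ is not totally ramified. Until something like this is included, your proposal establishes the ``if'' half of the lemma and only the easy portion of the ``only if'' half.
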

Now we can proceed to our first result.
\begin{theorem}\label{thm2.1}
Let $m\equiv 0 \pmod 3$ be odd and $n$ be any positive integers. If $d_1$ is the square-free part of $3(4m^{3n}-k^2)$ with $k\equiv \pm 1\pmod{18}$ and $\gcd(m, k)=1,$ then $3| h(d_1).$
\end{theorem}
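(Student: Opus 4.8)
The plan is to exhibit $\mathbb{Q}(\sqrt{d_1})$ as the quadratic subfield of the splitting field of a Kishi trinomial \eqref{eq1}, and then to show that this splitting field is an everywhere unramified cyclic cubic extension of $\mathbb{Q}(\sqrt{d_1})$; divisibility of $h(d_1)$ by $3$ then follows from class field theory.

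First I would work in $K=\mathbb{Q}(\sqrt{d})$, where $d$ is the square-free part of $k^2-4m^{3n}$, and set $\alpha=\tfrac{1}{2}\bigl(k+\sqrt{k^2-4m^{3n}}\bigr)$. Since $\alpha$ is a root of $X^2-kX+m^{3n}$, it lies in $\mathcal{O}_K$ and satisfies $N_{K/\mathbb{Q}}(\alpha)=m^{3n}=(m^n)^3$ and $T_{K/\mathbb{Q}}(\alpha)=k$; thus the associated trinomial is $f_\alpha(X)=X^3-3m^nX-k$. Because $m\equiv 0\pmod 3$ while $k\equiv\pm1\pmod{18}$ is prime to $3$, one checks that $3\nmid(k^2-4m^{3n})$, so $3\nmid d$ and the square-free part of $3(4m^{3n}-k^2)$ equals $-3d$. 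Hence, in the notation preceding Lemma~\ref{lma2.3}, $D=-3d=d_1$ and $L=\mathbb{Q}(\sqrt{D})=\mathbb{Q}(\sqrt{d_1})$.

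Next I would verify $\alpha\in R_d^{*}$. The gcd condition is immediate: $\gcd(N_{K/\mathbb{Q}}(\alpha),T_{K/\mathbb{Q}}(\alpha))=\gcd(m^{3n},k)=1$ by $\gcd(m,k)=1$. To see that $\alpha$ is not a cube in $K$ it suffices, by Lemma~\ref{lma2.1}, to prove $f_\alpha$ irreducible over $\mathbb{Q}$, i.e. that the monic cubic has no integer root. Any such root $r$ divides $k$, hence is odd; but then $k=r(r^2-3m^n)$ has the even factor $r^2-3m^n$ (as $m$ is odd), forcing $k$ even and contradicting $k\equiv\pm1\pmod{18}$. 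Thus $\alpha\in R_d^{*}$, and Lemma~\ref{lma2.3} produces the $S_3$-field $M:=S_{\mathbb{Q}}(f_\alpha)$, a cyclic cubic extension of $L=\mathbb{Q}(\sqrt{d_1})$ unramified outside $3$, with a cubic subfield $K'$ satisfying $v_3(\Delta_{K'})\neq 5$.

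It remains to rule out ramification of $M/L$ at the prime above $3$, which is where Lemma~\ref{lma2.5} enters with $a=3m^n$ and $b=k$. Here $v_3(b)=0<3$, so the hypothesis holds, while $v_3(a)=n+1\geq 2$, $a\equiv 0\pmod 9$, and $k^2\equiv 1\pmod 9$. Checking the three cases, (LN-1) fails since $v_3(b)=0<1$, (LN-2) fails since $b^2\equiv 1\equiv a+1\pmod 9$, and (LN-3) fails since $a\not\equiv 3\pmod 9$; hence $3$ is not totally ramified in $K'/\mathbb{Q}$. The crux is the $S_3$-dictionary: the inertia group at $3$ contains $\mathrm{Gal}(M/L)$ exactly when $3$ is totally ramified in the cubic resolvent $K'$, so non-total ramification in $K'$ is equivalent to $M/L$ being unramified at $3$. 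Combining this with Lemma~\ref{lma2.3} shows $M/L$ is an everywhere unramified cyclic cubic extension, and class field theory yields $3\mid h(L)=h(d_1)$. I expect the main obstacle to be precisely this last translation between the ramification of $3$ in $K'$ (which Lemma~\ref{lma2.5} controls) and the unramifiedness of $M/L$ (which class field theory needs), together with keeping careful track of the square-free part so that $D$ really coincides with $d_1$.
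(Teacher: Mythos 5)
Your proposal is correct and follows essentially the same route as the paper's own proof: the same element $\alpha=\tfrac{1}{2}\bigl(k+\sqrt{k^2-4m^{3n}}\bigr)$ and trinomial $X^3-3m^nX-k$, Lemma~\ref{lma2.3} for unramifiedness outside $3$, the Llorente--Nart criterion (Lemma~\ref{lma2.5}) at $3$, and Hilbert class field theory, with only cosmetic differences (you swap the roles of $d$ and $D$ and invoke the ``resp.''\ branch of Lemma~\ref{lma2.3}, you prove irreducibility by the rational root test instead of reduction mod $2$, and you spell out the congruence $k^2\equiv 1\equiv a+1\pmod 9$ that the paper's terse ``since $m\equiv 0\pmod 3$'' leaves implicit). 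One harmless slip: $v_3(3m^n)\geq n+1$ rather than $=n+1$, since $m$ may be divisible by $9$; your argument only uses $9\mid 3m^n$ and $v_3(k)=0$, so nothing breaks.
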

\begin{proof}
Let $D_1=-d_1/3, \hspace*{1mm} K_1=\mathbb{Q}(\sqrt{d_1})$ and $L_1 =\mathbb{Q}(\sqrt{D_1})$. Let $\alpha_1 \in \mathcal{O}_{L_1}$ so that  
 $$
 \alpha_1 = \frac{k+\sqrt{k^2-4m^{3n}}}{2}.
 $$
Then $T_{L_1/\mathbb{Q}}(\alpha_1)=k$ and $N_{L_1/\mathbb{Q}}(\alpha_1)=m^{3n}.$ Since $\gcd(m, k)=1,$ so that $$\gcd (T_{L_1/\mathbb{Q}}(\alpha_1), N_{L_1/\mathbb{Q}}(\alpha_1))=1.$$
 Now with respect to $\alpha_1 ,$
\begin{align*}
 f_{\alpha_1}(X): & =X^3-3[N_{L_1/\mathbb{Q}}(\alpha_1)]^{1/3}X-T_{L_1/\mathbb{Q}}(\alpha_1)\\
  & = X^3-3m^nX-k\\
  & \equiv X^3 + X+1 \pmod 2.
\end{align*}
Thus the polynomial $f_{\alpha_1} (X)$ is irreducible over $\mathbb{Z}_2$ and therefore it is irreducible over $\mathbb{Q}$. Therefore
by Lemma \ref{lma2.1}, $\alpha_1$ is not a cube in $L_1$ and thus $\alpha_1 \in R_{D_1}.$ Since $\gcd (T_{L_1/\mathbb{Q}}(\alpha_1), N_{L_1/\mathbb{Q}}(\alpha_1))=1,$ so that $\alpha_1\in R^*_{D_1}$ too. Therefore by Lemma \ref{lma2.3}, $S_{\mathbb{Q}}(f_{\alpha_1})$ is a cyclic cubic extension of $K_1$ which is unramified outside $3$.

Now we are left to show that $S_{\mathbb{Q}}(f_{\alpha_1})$ is unramified over $K_1$ at $3$ too.
The polynomial $f_{\alpha_1} (X)$ does not satisfy the condition (LN-1) as $\mathit{v}_3(1)=0$. Also $f_{\alpha_1}(X)$ does not 
satisfy the conditions (LN-2) and (LN-3) since $m\equiv 0 \pmod 3.$ Therefore by Lemma \ref{lma2.5}, $S_{\mathbb{Q}}(f_{\alpha_1})$ is unramified over $K_1$ at $3.$ Thus by Hilbert class field theory the class number of $K_1$ is divisible by $3.$
\end{proof}

\begin{theorem}\label{thm2.2}
 Let $m\equiv 0 \pmod 3$ and $n$ be any odd integers such that $\mathit{v}_3(n)=1.$ Then the class number of $\mathbb{Q}(\sqrt{-(m^2n^2\pm 4n)/3})$ 
 is divisible by $3.$
\end{theorem}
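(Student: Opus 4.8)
The plan is to mirror the proof of Theorem \ref{thm2.1} as closely as possible, and I expect the departure from that template to be exactly where the work lies. First I would fix the fields. Writing $d$ for the square-free part of $-(m^2n^2\pm 4n)/3$, the hypotheses $m\equiv 0\pmod 3$ and $\mathit{v}_3(n)=1$ give $\mathit{v}_3(m^2n^2\pm 4n)=1$, so $(m^2n^2\pm 4n)/3$ is an integer prime to $3$ and $3\nmid d$. Hence in the notation of the paper $D=-3d$, and one checks as fields that $K=\mathbb{Q}(\sqrt d)=\mathbb{Q}(\sqrt{-(m^2n^2\pm 4n)/3})$ is the (imaginary) field in question, while $L=\mathbb{Q}(\sqrt D)=\mathbb{Q}(\sqrt{m^2n^2\pm 4n})$ is real. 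Since we want an unramified cyclic cubic extension of $K$, Lemma \ref{lma2.3} directs us to an $\alpha\in\mathcal{O}_L$.

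The natural choice is $\alpha=\dfrac{mn^2+\sqrt{m^2n^4\pm 4n^3}}{2}\in\mathcal{O}_L$, for which $T_{L/\mathbb{Q}}(\alpha)=mn^2$ and $N_{L/\mathbb{Q}}(\alpha)=\mp n^3=(\mp n)^3$ is a perfect cube, so the associated trinomial is $f_\alpha(X)=X^3\pm 3nX-mn^2$. Reducing modulo $2$ and using that $m,n$ are odd gives $f_\alpha(X)\equiv X^3+X+1\pmod 2$, which has no root in $\mathbb{F}_2$ and is therefore irreducible; hence $f_\alpha$ is irreducible over $\mathbb{Q}$, and by Lemma \ref{lma2.1} $\alpha$ is not a cube in $L$, i.e.\ $\alpha\in R_D$.

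Here the argument genuinely departs from Theorem \ref{thm2.1}, and this is the step I expect to be the main obstacle. In Theorem \ref{thm2.1} one had $\gcd(T,N)=1$, which placed $\alpha$ in $R^*_D$ and let Lemma \ref{lma2.3} deliver ``unramified outside $3$'' for free. Now $\gcd(T_{L/\mathbb{Q}}(\alpha),N_{L/\mathbb{Q}}(\alpha))=\gcd(mn^2,n^3)=n^2\gcd(m,n)$ is divisible by $n^2$ (indeed by $3n^2$), so $\alpha\notin R^*_D$ and Lemma \ref{lma2.3} does not apply verbatim. Moreover the coefficients of $f_\alpha$ are too divisible by $3$ to feed into Lemma \ref{lma2.5}, since $\mathit{v}_3(3n)=2$ and $\mathit{v}_3(mn^2)\ge 3$, so the hypothesis of that lemma fails for $f_\alpha$ itself. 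To repair both defects I would pass to a minimal model of the cubic field: writing $m=3a$ and $n=3b$ with $3\nmid b$, the substitution $X=3Y$ turns $f_\alpha$ into $27\bigl(Y^3\pm bY-ab^2\bigr)$, so the cubic field $K'=\mathbb{Q}(\theta)$ generated by a root of $f_\alpha$ is also generated by a root of $g(Y)=Y^3\pm bY-ab^2$.

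Finally I would run the ramification analysis on $g$. Since $3\nmid b$ we have $\mathit{v}_3(\mp b)=0<2$, so Lemma \ref{lma2.5} applies to $g$; conditions (LN-1)--(LN-3) all fail, because $\mathit{v}_3(ab^2)\ge 1$ violates the upper bound in (LN-1) while $3\nmid b$ rules out (LN-2) and (LN-3). Hence $3$ is not totally ramified in $K'$, which yields that $S_{\mathbb{Q}}(f_\alpha)$ is unramified over $K$ at $3$. For the primes $p\neq 3$ one computes $\operatorname{disc}(g)=-b^3(27a^2b\pm 4)=-b^2\,(m^2n^2\pm 4n)/3$, where the factors $b$ and $27a^2b\pm 4$ are coprime (as $b$ is odd): the primes dividing $27a^2b\pm 4$ ramify already in $K=\mathbb{Q}(\sqrt{-(m^2n^2\pm 4n)/3})$, while a prime $p\mid b$ is at worst partially ramified in $K'$ with inertia generated by a transposition, so in either case the ramification matches that of $K$ and $S_{\mathbb{Q}}(f_\alpha)/K$ is unramified at $p$. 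Thus $S_{\mathbb{Q}}(f_\alpha)$ is an everywhere unramified cyclic cubic extension of $K$, and Hilbert class field theory gives $3\mid h(d)$. The delicate point throughout is precisely the loss of coprimality: it is what forces the reduction to $g$ and what shifts the real ramification bookkeeping from the prime $3$ to the primes dividing $n$.
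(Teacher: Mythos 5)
Your proposal diverges from the paper at the very first real choice, and the divergence is what creates the gap. The paper does not use your $\alpha=\frac{mn^2+\sqrt{m^2n^4\pm 4n^3}}{2}$; it takes the \emph{unit}
$\alpha_2=\frac{m^2n\pm 2+m\sqrt{m^2n^2\pm 4n}}{2}$,
which satisfies $N_{L/\mathbb{Q}}(\alpha_2)=1$ and $T_{L/\mathbb{Q}}(\alpha_2)=m^2n\pm 2$. With this choice the coprimality $\gcd(N,T)=1$ is automatic, so $\alpha_2\in R^*_{D}$ and Lemma \ref{lma2.3} applies verbatim to give ``unramified outside $3$''; moreover the trinomial $f_{\alpha_2}(X)=X^3-3X-(m^2n\pm 2)$ has $\mathit{v}_3(3)=1<2$, so Lemma \ref{lma2.5} applies directly (LN-1 and LN-2 fail trivially, and LN-3 fails because $\mathit{v}_3(m^2n)\ge 3$ forces $(m^2n\pm2)^2\equiv 4\equiv 3+1\pmod{27}$). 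The entire difficulty you wrestle with --- the loss of coprimality, the inapplicability of Lemmas \ref{lma2.3} and \ref{lma2.5}, the rescaling $X=3Y$, and the by-hand ramification analysis away from $3$ --- simply never arises.

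Because your $\alpha\notin R^*_D$, you must prove ``unramified outside $3$'' from scratch, and that is where your argument has a genuine hole. Your dichotomy is: primes $p\mid b$ are at worst partially ramified in $K'$ (asserted, not proved), and primes $p\mid 27a^2b\pm 4$ ``ramify already in $K$.'' The second claim is false whenever such a prime divides $27a^2b\pm4$ to an even power, since it then need not divide the square-free part $d$ at all. Concretely, take $m=3$, $n=69$ with the plus sign: then $a=1$, $b=23$, $27a^2b+4=625=5^4$, and $-(m^2n^2+4n)/3=-5^4\cdot 23$, so $d=-23$ and $5$ is unramified (in fact inert) in $K=\mathbb{Q}(\sqrt{-23})$, yet $5$ divides your $\operatorname{disc}(g)$. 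For such a prime your argument gives nothing, and you still must rule out $5$ being totally ramified in $K'$ --- which is exactly the situation that would make $S_{\mathbb{Q}}(f_\alpha)/K$ ramified at $5$. Both of your unproved claims are in fact true and can be established (for $p\nmid 3ab$ dividing the discriminant, the cubic has modulo $p$ a double root distinct from its simple root, so Hensel gives a $1+2$ factorization over $\mathbb{Q}_p$; for $p\mid b$ a Newton-polygon argument gives the same splitting), but neither argument appears in your proposal, and the one reason you do give is wrong. As written, the proof is incomplete at precisely the step you yourself identified as the crux; the paper's lesson is that choosing $\alpha$ to be a unit makes that step disappear.
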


\begin{proof} 
Let $d_2$ be the square-free part of $-(m^2n^2\pm 4n)/3$ and $K_2= \mathbb{Q}(\sqrt{d_2}).$ Suppose that $D_2=-3d_2$ and $L_2=\mathbb{Q}(\sqrt{D_2})$. 
Let $\alpha_2 \in \mathcal{O}_{L_2}$ be so that
 $$
 \alpha_2 = \dfrac{m^2n\pm 2+m\sqrt{m^2n^2\pm 4n}}{2}.
 $$
 Then $T_{L_2/\mathbb{Q}}(\alpha)=m^2n\pm 2$ and $N_{L_2/\mathbb{Q}}(\alpha_2)=1$. Thus  again their gcd is $1$.
 
We can now have the cubic polynomial corresponding to such an $\alpha_2$:
\begin{align*}
 f_{\alpha_2}(X): & =X^3-3[N_{L_2/\mathbb{Q}}(\alpha_2)]^{1/3}X-T_{L_2/\mathbb{Q}}(\alpha_2)\\
  & = X^3-3X-(m^2n\pm 2)\\
  & \equiv X^3-X- 1 \pmod 2.
\end{align*}
Thus the polynomial $f_{\alpha_2} (X)$ is irreducible over $\mathbb{Z}_2$ and therefore it is irreducible over $\mathbb{Q}$. Therefore
by lemma \ref{lma2.1}, $\alpha_2$ is not a cube in $L_2$ 
and thus $\alpha_2\in R^*_{D_2}.$ Therefore by Lemma \ref{lma2.3}, $S_{\mathbb{Q}}(f_{\alpha_2})$ is a cyclic cubic extension of $K_2$ unramified outside $3.$

Now it remains to show that $S_{\mathbb{Q}}(f_{\alpha_2})$ is unramified over $K_2$ at $3.$
Since $m\equiv 0 \pmod 3$ and $\mathit{v}_3(n)=1,$ the polynomial $f_{\alpha_2} (X)$ does not satisfy the conditions (LN-1), (LN-2) and (LN-3). 
Therefore by Lemma \ref{lma2.5}, 
$S_{\mathbb{Q}}(f_{\alpha_2})$ is unramified over $K_2$ at $3$ too. Thus by Hilbert class field theory the class number of $K_2$ is divisible by $3.$
\end{proof}

We now give an extension of a result proved by Hoque and Saikia [\cite{HS151}, Theorem 3.1].

\begin{theorem}\label{thm2.3}
Let $m>1$ and $p$ be odd integers, and $n$ be any positive integer. Let $d_3$ be the square-free part of $-(3^m p^{2n}+r)$ with $r \in \{-2, 4\}.$ Then
$3| h(d_3).$
\end{theorem}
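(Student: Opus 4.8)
The plan is to follow the template of Theorems~\ref{thm2.1} and \ref{thm2.2}: exhibit an element $\alpha_3\in\mathcal{O}_{L_3}$ whose norm is a perfect cube, attach to it Kishi's trinomial, and then extract an unramified cyclic cubic extension of $K_3$. Put $K_3=\mathbb{Q}(\sqrt{d_3})$, $D_3=-3d_3$ and $L_3=\mathbb{Q}(\sqrt{D_3})$. Since $3^mp^{2n}+r\equiv r\not\equiv 0\pmod 3$, we have $3\nmid d_3$, so the second branch of the definition of $D$ applies and indeed $D_3=-3d_3$. The decisive input is that $m$ is odd: then $3^{m-1}p^{2n}=(3^{(m-1)/2}p^{n})^{2}$ is a perfect square, so with $E:=3^{(m-1)/2}p^{n}$ one has $3^mp^{2n}=3E^{2}$ and $3^{m+1}p^{2n}=(3E)^{2}$. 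I expect this is exactly what makes the quadratic resolvent of the trinomial collapse onto $K_3$.

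For the two admissible values of $r$ I would use the norm-one elements
\[
\alpha_3=\frac{(3^mp^{2n}+2)+\sqrt{(3^mp^{2n}+2)^{2}-4}}{2}\quad(r=4),\qquad
\alpha_3=(3^mp^{2n}-1)+E\sqrt{3(3^mp^{2n}-2)}\quad(r=-2).
\]
Here $N_{L_3/\mathbb{Q}}(\alpha_3)=1=1^{3}$ and $T_{L_3/\mathbb{Q}}(\alpha_3)$ equals $3^mp^{2n}+2$, resp. $2(3^mp^{2n}-1)$, so $\gcd(N_{L_3/\mathbb{Q}}(\alpha_3),T_{L_3/\mathbb{Q}}(\alpha_3))=1$ is automatic. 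The identities $(3^mp^{2n}+2)^{2}-4=3^mp^{2n}(3^mp^{2n}+4)=E^{2}\cdot 3(3^mp^{2n}+4)$ and $\bigl(2(3^mp^{2n}-1)\bigr)^{2}-4=4\cdot 3^mp^{2n}(3^mp^{2n}-2)=(2E)^{2}\cdot 3(3^mp^{2n}-2)$ simultaneously place $\alpha_3$ in $\mathcal{O}_{L_3}$ and, after removing the evident perfect square, identify the quadratic subfield $\mathbb{Q}(\sqrt{3(4N_{L_3/\mathbb{Q}}(\alpha_3)-T_{L_3/\mathbb{Q}}(\alpha_3)^{2})})$ of $S_{\mathbb{Q}}(f_{\alpha_3})$ with $\mathbb{Q}(\sqrt{-(3^mp^{2n}+r)})=K_3$.

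It remains to verify $\alpha_3\in R^{*}_{D_3}$, i.e. that $\alpha_3$ is not a cube in $L_3$. For $r=4$ this is painless: the associated trinomial is $f_{\alpha_3}(X)=X^{3}-3X-(3^mp^{2n}+2)\equiv X^{3}+X+1\pmod 2$, which is irreducible over $\mathbb{Z}_2$, hence over $\mathbb{Q}$, so Lemma~\ref{lma2.1} forces $\alpha_3$ to be a non-cube. For $r=-2$ the trace is even---indeed here $D_3\equiv 3\pmod 4$, so $\mathcal{O}_{L_3}=\mathbb{Z}[\sqrt{D_3}]$ and every element has even trace, whence the mod~$2$ reduction degenerates; this is the one genuinely delicate point. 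Since $\alpha_3$ is now a unit of norm $1$, showing it is not a cube amounts to showing $X^{3}-3X-2(3^mp^{2n}-1)$ has no rational root, equivalently that $\alpha_3$ is not the cube of a unit of $L_3$; I would settle this by locating $\alpha_3$ among the powers of the fundamental unit of $L_3$. Either way, Lemma~\ref{lma2.3} then provides a cyclic cubic extension $S_{\mathbb{Q}}(f_{\alpha_3})/K_3$ unramified outside $3$.

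Finally I would dispose of ramification at $3$ uniformly in $r$, applying Lemma~\ref{lma2.5} to $g(X)=X^{3}-3X-T$ with $a=3$ and $b=T$. One has $\mathit{v}_3(a)=1<2$, and $T\equiv\pm2\pmod 3$ gives $3\nmid T$, so (LN-1) fails; $a=3\equiv 3\pmod 9$ makes (LN-2) fail; and since $m>1$ is odd we have $m\ge 3$, whence $3^mp^{2n}\equiv 0\pmod{27}$, $T\equiv\pm2\pmod{27}$, and $T^{2}\equiv 4\equiv a+1\pmod{27}$, so (LN-3) fails as well. Thus $3$ is not totally ramified in the cubic field, the extension $S_{\mathbb{Q}}(f_{\alpha_3})/K_3$ is unramified also at $3$, and Hilbert class field theory yields $3\mid h(d_3)$. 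Apart from the non-cube verification when $r=-2$, the whole argument is the square-factor bookkeeping enabled by the oddness of $m$.
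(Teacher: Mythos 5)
Your construction is exactly the paper's: the same $D_3=-3d_3$ and $L_3$, the same elements $\alpha_3$ for $r=4$ and $r=-2$, the same trinomial, and the same use of Lemmas \ref{lma2.1}, \ref{lma2.3} and \ref{lma2.5}; your $r=4$ case and your Llorente--Nart analysis at $3$ (using $m\ge 3$ to kill (LN-3)) are complete and correct, and in fact slightly more explicit than the paper's. The genuine gap is the step you flagged and then deferred: for $r=-2$ you never prove that $\alpha_3=(3^mp^{2n}-1)+3^{(m-1)/2}p^n\sqrt{3(3^mp^{2n}-2)}$ is a non-cube in $L_3$. You correctly observe that the parity of the trace makes the mod-$2$ reduction degenerate to $X^3+X$ (this is precisely what the paper sweeps under ``one can now complete the proof by a similar argument'', which taken literally is false), but ``locating $\alpha_3$ among the powers of the fundamental unit'' is a promissory note, not an argument. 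So your proposal, as written, establishes only the $r=4$ half of the theorem.

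Moreover, this gap cannot be closed, because the $r=-2$ assertion is false as stated. Your own reduction is correct: since $N_{L_3/\mathbb{Q}}(\alpha_3)=1$ and $D_3\equiv 3\pmod 4$, Lemma \ref{lma2.1} shows $\alpha_3$ is a cube in $L_3$ if and only if $f_{\alpha_3}(X)=X^3-3X-2(3^mp^{2n}-1)$ has an integer root $x=2s$, i.e.\ if and only if $3^mp^{2n}=(s+1)(2s-1)^2$ for some $s\in\mathbb{Z}$. This equation is solvable under the theorem's hypotheses: $s=26$ gives $27\cdot 51^2=3^5\cdot 17^2$, so $m=5$, $p=17$, $n=1$ are admissible, and indeed $\alpha_3=70226+40545\sqrt{3}=(26+15\sqrt{3})^3$ is a cube, so no unramified cubic extension is produced. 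Worse, the conclusion itself fails there: $3^5\cdot 17^2-2=70225=265^2$, hence $d_3=-1$ and $h(-1)=1$, which is not divisible by $3$. (The degenerate choice $p=1$, $m=3$ already gives $3^3-2=25$, $d_3=-1$.) Thus the $r=-2$ part of Theorem \ref{thm2.3} requires an additional hypothesis --- at the very least that $3^mp^{2n}-2$ is not a perfect square --- and neither your sketch nor the paper's ``similar argument'' can be completed without it.
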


\begin{proof} Let $r=4$ and then $d_3\equiv 1\pmod 4$.
As before we set $D_3=-3d_3$ and $L_3=\mathbb{Q}(\sqrt{D_3})$. Then $L_3= \mathbb{Q}(\sqrt{3^{m+1}p^{2n}+12})$.
Choose $\alpha_3 \in\mathcal{O}_{L_3}$ by 
 $$
 \alpha_3:= \dfrac{3^m p^{2n}+2+3^{(m-1)/2} p^n\sqrt{3^{m+1} p^{2n}+12}}{2}.
 $$
 Then $T_{L_3/\mathbb{Q}}(\alpha_3)=3^mp^{2n}+2,\ N_{L_3/\mathbb{Q}}(\alpha_3)=1$ and thus $$\gcd(T_{L_3/\mathbb{Q}}(\alpha_3), N_{L_3/\mathbb{Q}}(\alpha_3))=1.$$
The cubic polynomial corresponding to $\alpha_3$ is:
\begin{align*}
 f_{\alpha_3}(X): & =X^3-3[N_{L_3/\mathbb{Q}}(\alpha_3)]^{1/3}X-T_{L_3/\mathbb{Q}}(\alpha_3)\\
  & = X^3-3X-3^mp^{2n}-2\\
  & \equiv X^3-X-1 \pmod 2.
\end{align*}
Thus the polynomial $f_{\alpha_3} (X)$ is irreducible over $\mathbb{Z}_2$ and therefore it is irreducible over $\mathbb{Q}$. Therefore
by Lemma \ref{lma2.1}, $\alpha_3$ is not a cube in $L_3$ 
and hence $\alpha_3\in R^*_{D_3}.$ Thus by Lemma \ref{lma2.3}, $S_{\mathbb{Q}}(f_{\alpha_3})$ is a cyclic cubic extension of $K_3$ unramified outside $3.$

We now claim that $S_{\mathbb{Q}}(f_{\alpha_3})$ is unramified over $K_3$ at $3$ too. The polynomial $f_{\alpha_3} (X)$ does not satisfy the conditions (LN-1), (LN-2) and (LN-3) as $\mathit{v}_3(3^mp^{2n}+2) = 0$ and $m>1$. 
Therefore by Lemma \ref{lma2.5}, we proof the claim. Thus by Hilbert class field theory the class number of $K_3$ is divisible by $3.$

Now let $r=-2.$ Then $3\nmid d_3$ and $d_3\equiv 3 \pmod 4$. 
Let us set $D_3' =-3d_3$ and $L'_3 =\mathbb{Q}(\sqrt{D_3'})$. Then $L'_3= \mathbb{Q}(\sqrt{3^{m+1}p^{2n}-6})$ and choose an 
element $\alpha'_3 \in \mathcal{O}_{L'_3}$ by 
 $$
 \alpha'_3:=3^m p^{2n}-1+ 3^{(m-1)/2} p^n\sqrt{3^{m+1} p^{2n}-6}.
$$
One can now complete the proof by a similar argument as in the previous case.
\end{proof}

\begin{theorem}\label{thm2.4}
Let $n>1$ be an odd integer and $a, b$ two more integers such that:
\begin{enumerate}
 \item[(C3.1)] $a \equiv 19 \pmod {30}$,
 \item[(C3.2)] $b\equiv 6 \pmod {15},$ and is coprime to $a$,
\end{enumerate} 
then $3$ divides the class number of $\mathbb{Q}(\sqrt{3(a^{3n}-b^{2n})}).$
\end{theorem}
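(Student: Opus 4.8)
The plan is to follow the template of Theorems \ref{thm2.1}--\ref{thm2.3} almost verbatim; the only genuinely new inputs are the choice of the generating element and, crucially, the prime used to force irreducibility. First I would pin down the relevant fields. Since (C3.1) gives $a\equiv 19\equiv 1\pmod 3$ and (C3.2) gives $b\equiv 6\equiv 0\pmod 3$, we get $a^{3n}-b^{2n}\equiv 1\pmod 3$, so $3\nmid(a^{3n}-b^{2n})$ and hence $v_3\big(3(a^{3n}-b^{2n})\big)=1$. Consequently $3\mid d_4$, where $d_4$ is the square-free part of $3(a^{3n}-b^{2n})$, so the definition preceding Lemma \ref{lma2.1} gives $D_4=-d_4/3$, and a short computation identifies $L_4:=\mathbb{Q}(\sqrt{D_4})=\mathbb{Q}(\sqrt{b^{2n}-a^{3n}})$. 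Put $K_4=\mathbb{Q}(\sqrt{d_4})$.

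Next I would exhibit the element. Take $\alpha_4=b^n+\sqrt{b^{2n}-a^{3n}}\in\mathcal{O}_{L_4}$; then $T_{L_4/\mathbb{Q}}(\alpha_4)=2b^n$ and $N_{L_4/\mathbb{Q}}(\alpha_4)=b^{2n}-(b^{2n}-a^{3n})=a^{3n}=(a^n)^3$ is a cube in $\mathbb{Z}$. Because $a$ is odd and coprime to $b$, we have $\gcd(2b^n,a^{3n})=1$. The associated trinomial is $f_{\alpha_4}(X)=X^3-3a^nX-2b^n$.

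The main obstacle---and the reason the hypotheses are stated modulo $30$ and $15$ rather than modulo $2$---is irreducibility. Unlike in Theorems \ref{thm2.1}--\ref{thm2.3}, reduction mod $2$ is useless here, since $f_{\alpha_4}(X)\equiv X(X+1)^2\pmod 2$. Instead I would reduce mod $5$. Using $a\equiv 4\equiv -1\pmod 5$, $b\equiv 1\pmod 5$ and $n$ odd (so $a^n\equiv -1$, $b^n\equiv 1\pmod 5$), one finds $f_{\alpha_4}(X)\equiv X^3+3X+3\pmod 5$, which has no root in $\mathbb{F}_5$ and is therefore irreducible over $\mathbb{F}_5$; hence $f_{\alpha_4}$ is irreducible over $\mathbb{Q}$. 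Lemma \ref{lma2.1} then shows $\alpha_4$ is not a cube in $L_4$, so $\alpha_4\in R^*_{D_4}$, and Lemma \ref{lma2.3} gives that $S_{\mathbb{Q}}(f_{\alpha_4})$ is a cyclic cubic extension of $K_4$ unramified outside $3$.

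Finally I would rule out ramification at $3$ via Lemma \ref{lma2.5} applied to $g(X)=X^3-3a^nX-2b^n$, that is with $A=3a^n$ and $B=2b^n$. Here $3\nmid a$ gives $v_3(A)=1$ (so the precondition $v_3(A)<2$ holds), while $3\mid b$ gives $v_3(B)=n\,v_3(b)\ge n\ge 3$. Thus $v_3(B)>v_3(A)$ rules out (LN-1), and $3\mid B$ violates the requirement $3\nmid B$ in both (LN-2) and (LN-3); so none of the three conditions holds, and $3$ is unramified in $S_{\mathbb{Q}}(f_{\alpha_4})$ over $K_4$. Therefore $S_{\mathbb{Q}}(f_{\alpha_4})$ is an everywhere unramified cyclic cubic extension of $K_4$, and Hilbert class field theory yields $3\mid h(d_4)$.
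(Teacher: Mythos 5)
Your proposal is correct and follows essentially the same route as the paper's own proof: the same element $\alpha_4=b^n+\sqrt{b^{2n}-a^{3n}}$ with trace $2b^n$ and norm $a^{3n}$, irreducibility of $X^3-3a^nX-2b^n$ forced by reduction modulo $5$ (your $X^3+3X+3$ is the paper's $X^3+3X-2$ modulo $5$), then Lemmas \ref{lma2.1}, \ref{lma2.3} and \ref{lma2.5} followed by Hilbert class field theory. Your write-up is in fact slightly more careful than the paper's, e.g.\ in checking that $3\mid d_4$ so that $D_4=-d_4/3$ is the correct case, and in verifying the precondition $v_3(3a^n)<2$ of Lemma \ref{lma2.5}.
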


\begin{proof}
Let $d_4$ be the square-free part of $3(a^{3n}-b^{2n})$ and $\ K_4 = \mathbb{Q}(\sqrt{d_4}).$ Suppose that $\ D_4 = -d_4/3$ and $L_4 = \mathbb{Q}(\sqrt{D_4}).$ 

Let $\alpha_4 \in \mathcal{O}_{L_4}$ so that  
 $$
 \alpha_4 = b^n+\sqrt{b^{2n}-a^{3n}}.
 $$
 Then $T_{L_4/\mathbb{Q}}(\alpha_4)=2b^n$ and $N_{L_4/\mathbb{Q}}(\alpha_4)=a^{3n}.$ 
Then 
\begin{align*}
 f_{\alpha_4}(X): & =X^3-3\big(N_{L_4/\mathbb{Q}}(\alpha_4)\big)^{1/3}X-T_{L_4/\mathbb{Q}}(\alpha_4)\\
  & = X^3-3a^nX-2b^n.
\end{align*}
By the conditions (C3.1) and (C3.2), we have 
$$f_{\alpha_4}(X)\equiv X^3+3X-2\pmod 5.$$
Clearly the polynomial $f_{\alpha_4}(X)$ is irreducible over $\mathbb{Z}_5$ and hence it is irreducible over $\mathbb{Q}$ too.
Thus  $\alpha_4$ is not a cube in $L_4$ by Lemma \ref{lma2.1} and hence $\alpha_4\in R_{D_4}.$
The conditions (C3.1) and (C3.2) entail $\gcd(N_{L_4/\mathbb{Q}} (\alpha), T_{L_4/\mathbb{Q}}(\alpha_4))=1$ and therefore by 
Lemma \ref{lma2.3}, $S_{\mathbb{Q}}(f_{\alpha_4})$ is a cyclic cubic extension of $K_4$ unramified outside $3.$

Now we are left to show that $S_{\mathbb{Q}}(f_{\alpha_4})$ is unramified over $K_4$ at $3$ also. The polynomial $f_{\alpha_4} (X)$ does not satisfy the condition (LN-1) since $n>1, \ a \equiv 1 \pmod 3$ and $b\equiv 0 \pmod 3.$ 
Again  $3\mid 2b^n$ by the condition (C3.2), and thus $f_{\alpha_4}
(X)$ satisfies none of the conditions (LN-2) and (LN-3). 
Therefore by Lemma \ref{lma2.5}, $S_{\mathbb{Q}}(f_{\alpha_4})$ is unramified over $K_4$ at $3.$ 
Thus by Hilbert class field theory the class number of $K_4$ is divisible by $3.$
\end{proof}

\begin{theorem}\label{thm2.5}
Let $n>1$ be an integer and $a, b$ two more integers satisfying:
\begin{enumerate}
 \item[(C3.3)] $\gcd(a, b)=1$;
 \item[(C3.4)] $a\equiv 1 \pmod  3$ and is odd;
 \item[(C3.5)] $b\equiv 0 \pmod 3$ and is odd;
\end{enumerate} 
Then $3$ divides the class number of $\mathbb{Q}(\sqrt{3(4a^{3n}-b^{2n})})$.
\end{theorem}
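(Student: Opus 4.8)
The plan is to follow the template established in Theorems \ref{thm2.1}--\ref{thm2.4}: exhibit an element $\alpha_5 \in \mathcal{O}_{L_5}$ whose associated Kishi trinomial is irreducible and has coprime norm and trace, so that Lemmas \ref{lma2.1} and \ref{lma2.3} produce a cyclic cubic extension of $K_5=\mathbb{Q}(\sqrt{d_5})$ that is unramified outside $3$, and then rule out ramification at $3$ via the Llorente--Nart criterion of Lemma \ref{lma2.5}.

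First I would fix notation. Let $d_5$ be the square-free part of $3(4a^{3n}-b^{2n})$ and $K_5=\mathbb{Q}(\sqrt{d_5})$. Since (C3.4) gives $a\equiv 1\pmod 3$ and (C3.5) gives $b\equiv 0\pmod 3$, one checks $4a^{3n}-b^{2n}\equiv 1\pmod 3$, so $\mathit{v}_3\big(3(4a^{3n}-b^{2n})\big)=1$ and hence $3\mid d_5$. Thus we fall into the first case of the definition of $D$: set $D_5=-d_5/3$ and $L_5=\mathbb{Q}(\sqrt{D_5})$, which equals $\mathbb{Q}(\sqrt{b^{2n}-4a^{3n}})$. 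I would then take
$$\alpha_5=\frac{b^n+\sqrt{b^{2n}-4a^{3n}}}{2}\in\mathcal{O}_{L_5},$$
namely a root of $X^2-b^nX+a^{3n}$, so that $T_{L_5/\mathbb{Q}}(\alpha_5)=b^n$ and $N_{L_5/\mathbb{Q}}(\alpha_5)=a^{3n}$, the latter being a perfect cube as required by \eqref{eq1}.

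The corresponding trinomial is $f_{\alpha_5}(X)=X^3-3a^nX-b^n$. Because $a$ and $b$ are odd by (C3.4) and (C3.5), reduction modulo $2$ gives $f_{\alpha_5}(X)\equiv X^3+X+1\pmod 2$, which has no root in $\mathbb{F}_2$ and so is irreducible over $\mathbb{Z}_2$, hence over $\mathbb{Q}$. By Lemma \ref{lma2.1} this forces $\alpha_5$ to not be a cube in $L_5$, so $\alpha_5\in R_{D_5}$; and since $\gcd(a,b)=1$ by (C3.3) we get $\gcd\big(N_{L_5/\mathbb{Q}}(\alpha_5),T_{L_5/\mathbb{Q}}(\alpha_5)\big)=\gcd(a^{3n},b^n)=1$, so in fact $\alpha_5\in R^*_{D_5}$. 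Lemma \ref{lma2.3} then yields that $S_{\mathbb{Q}}(f_{\alpha_5})$ is a cyclic cubic extension of $K_5$ unramified outside $3$.

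The remaining and most delicate step is ramification at $3$, handled by Lemma \ref{lma2.5} applied to $g(X)=f_{\alpha_5}(X)$ with coefficients $a_{\mathrm{LN}}=3a^n$ and $b_{\mathrm{LN}}=b^n$. Here $\mathit{v}_3(a_{\mathrm{LN}})=1<2$, so the hypothesis of the lemma holds, and the point to verify carefully is that all three conditions fail. Since $\mathit{v}_3(b^n)=n\,\mathit{v}_3(b)\ge n>1=\mathit{v}_3(3a^n)$ by (C3.5) and $n>1$, condition (LN-1) fails; and since $3\mid b^n$, both (LN-2) and (LN-3) fail at the requirement $3\nmid b$. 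Hence $3$ is not totally ramified in the cubic subfield, so $S_{\mathbb{Q}}(f_{\alpha_5})$ is unramified over $K_5$ at $3$ as well. Having produced an everywhere-unramified cyclic cubic extension of $K_5$, Hilbert class field theory gives $3\mid h(d_5)$. I expect the only genuine care to be needed in pinning down the two $3$-adic valuations precisely, so that all the Llorente--Nart conditions are excluded.
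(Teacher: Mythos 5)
Your proposal is correct and follows essentially the same route as the paper's own proof: the same element $\alpha_5=\frac{b^n+\sqrt{b^{2n}-4a^{3n}}}{2}$, the same trinomial $X^3-3a^nX-b^n$ shown irreducible via reduction modulo $2$, membership in $R^*_{D_5}$ via (C3.3), and the same exclusion of (LN-1)--(LN-3) before invoking Hilbert class field theory. The only (welcome) additions are your explicit checks that $3\mid d_5$, so that $D_5=-d_5/3$ is the right case of the definition of $D$, and that the valuation hypothesis $\mathit{v}_3(3a^n)=1<2$ of Lemma \ref{lma2.5} holds, both of which the paper leaves implicit.
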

\begin{proof}
Let $d_5$ be the sqaure-free part of $3(4a^{3n}-b^{2n})$ and $K_5=\mathbb{Q}(\sqrt{d_5}).$ Suppose that $D_5=-d_5 /3$ and $L_5=\mathbb{Q}(\sqrt{D_5})$. 
 Let $\alpha _5 \in \mathcal{O}_{L_5}$ be of the form 
 $$\alpha _5 = \frac{b^n+\sqrt{b^{2n}-4a^{3n}}}{2}.$$
 Then $T_{L_5 /\mathbb{Q}}(\alpha _5)=b^n$ and $N_{L_5/\mathbb{Q}}(\alpha _5)=a^{3n}$ and the 
corresponding polynomial 
\begin{align*}
 f_{\alpha _5}(X): & =X^3-3[N_{L_5/\mathbb{Q}}(\alpha _5)]^{1/3}X-T_{L_5/\mathbb{Q}}(\alpha _5)\\
  & = X^3-3a^nX-b^n.
\end{align*}
The conditions (C3.4) and (C3.5) would imply $f_{\alpha _5} (X)\equiv X^3-X-1\pmod 2.$ Therefore $f_{\alpha _5} (X)$ is irreducible over $\mathbb{Z}_2$ and thus it is 
irreducible over $\mathbb{Q}$.
Thus by Lemma \ref{lma2.1}, $\alpha _5$ is not a cube in $L_5$ and hence $\alpha_5\in R_{D_5}$. By (C3.3), $\gcd(N_{L_5/\mathbb{Q}} (\alpha _5), T_{L_5/\mathbb{Q}}(\alpha _5))=1$ and therefore by 
Lemma \ref{lma2.3}, $S_{\mathbb{Q}}(f_{\alpha_5})$ is a cyclic cubic extension of $K_5$ unramified outside $3.$

It remains to prove that $S_{\mathbb{Q}}(f_{\alpha_5})$ is unramified over $K_5$ at $3$ too.
Clearly  $\mathit{v}_3(b^n)>\mathit{v}_3(3a^n)=1$ owing to (C3.4) and (C3.5) and thus $f_{\alpha _5} (X)$ does not satisfy the condition (LN-1). 
Also $3\mid b^n$ due to (C3.5) and thus $f_{\alpha _5}
(X)$ does not satisfy the conditions (LN-2) and (LN-3). Therefore by Lemma \ref{lma2.5},
$S_{\mathbb{Q}}(f_{\alpha_5})$ is unramified over $K_5$ at $3.$ 
Thus by Hilbert class field theory the class number of $K_5$ is divisible by $3.$
\end{proof}

\section{Some more families of quadratic fields}
In this section, we shall use a result of Kishi and Miyake \cite{KM00} for proving the first of the two theorems.  Let us first recall Kishi-Miyake parametrization.
\begin{lemma}\label{lma3.1}
Let $u$ and $v$ be two integers and
 \begin{equation}\label{eq6}
f_{u, v}(x)=x^3-uvx-u^2.
\end{equation}
If 
 \begin{enumerate}\label{ctn3.1}
  \item[(KM-1)] $u$ and $v$ are relatively prime;
  \item[(KM-2)] $f_{u, v}(x)$ is irreducible over $\mathbb{Q}$;
  \item[(KM-3)] discriminant $D_{f_{u, v}}$ of $f_{u, v}(x)$ is not a perfect square in $\mathbb{Z}$;
  \item[(KM-4)] one of the following conditions hold:
  \begin{enumerate}
   \item[(a)] $3\nmid v,$
   \item[(b)] $3\mid v,\ uv\not\equiv 3\pmod 9$ and $u\equiv v\pm 1 \pmod 9 ,$
   \item[(c)] $3\mid v, \ uv\equiv 3 \pmod 9 $ and $u\equiv v\pm 1 \pmod { 27} ,$
  \end{enumerate}
   \end{enumerate}
then $3$ divides the class number of $\mathbb{Q}(\sqrt{D_{f_{u, v}}})$. 
Conversely, every quadratic number field $\mathbb{Q}(\sqrt{D_{f_{u, v}}})$ with class number divisible by $3$ arises in the above way from 
a suitable choices of integers $u$ and $v$.
\end{lemma}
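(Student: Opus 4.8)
The plan is to realize $\mathbb{Q}(\sqrt{D_{f_{u,v}}})$ as the quadratic subfield of the splitting field of the trinomial and to detect the factor $3$ in its class number through an everywhere unramified cyclic cubic extension. First I would compute the discriminant of the depressed cubic $f_{u,v}(x)=x^3-uvx-u^2$, namely
\[
D_{f_{u,v}}=4(uv)^3-27(u^2)^2=u^3(4v^3-27u).
\]
Writing $\theta$ for a root, $F=\mathbb{Q}(\theta)$ and $M=S_{\mathbb{Q}}(f_{u,v})$, condition (KM-2) gives $[F:\mathbb{Q}]=3$ and (KM-3) forces $\mathrm{Gal}(M/\mathbb{Q})\cong S_3$; hence the unique quadratic subfield of $M$ is $k:=\mathbb{Q}(\sqrt{D_{f_{u,v}}})$ and $M/k$ is cyclic of degree $3$. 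By Hilbert class field theory it then suffices to prove that $M/k$ is everywhere unramified, for then $M$ lies in the Hilbert class field of $k$ and $3=[M:k]$ divides $h(k)$. The guiding principle I would use throughout is the inertia dictionary in the $S_3$-tower: a finite prime is unramified in $M/k$ if and only if it is not totally ramified in $F/\mathbb{Q}$ (total ramification makes the inertia group $A_3$, disjoint from the transpositions, forcing $e=3$ in $M/k$, whereas partial ramification places the inertia inside a transposition and is absorbed by $k$). The archimedean places cause no trouble, since $M/k$ has odd degree.

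For a prime $p\neq 3$ I would exploit (KM-1). If $p\nmid u$, then $p$ can divide $D_{f_{u,v}}$ only through $4v^3-27u$, so $\bar f_{u,v}\bmod p$ has at most a simple double root; moreover the vanishing of the coefficient of $x^2$ rules out the pure cube $(x-c)^3\bmod p$ unless $p\mid u$, so $p$ is never totally ramified here. If $p\mid u$, then $p\nmid v$ by (KM-1), and the Newton polygon of $f_{u,v}$ at $p$ (with vertices coming from $u^2$, $uv$ and the leading term) splits off a linear factor together with a quadratic factor whose ramification is at most tame and partial, giving $v_p(d_F)\le 1$ and again excluding total ramification. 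Hence no prime $p\neq 3$ is totally ramified in $F$, so $M/k$ is unramified away from $3$.

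It remains to treat the prime $3$, and this is where (KM-4) enters through Lemma \ref{lma2.5}. Applying that lemma to $g=f_{u,v}$ with $a=uv$ and $b=u^2$, I would first note that (LN-1) can never hold for this trinomial: $v_3(b)=2v_3(u)\ge 1$ forces $3\mid u$, whence $3\nmid v$ and $v_3(a)=v_3(u)<2v_3(u)=v_3(b)$. A direct congruence check then shows that the three alternatives of (KM-4) are precisely the negations of the remaining Llorente--Nart conditions: case (a) $3\nmid v$ kills (LN-2) and (LN-3) since either $3\nmid a$ or $3\mid b$; case (b) is the regime $3\mid v,\ uv\not\equiv 3\pmod 9$ in which $u\equiv v\pm1\pmod 9$ is equivalent to $b^2\equiv a+1\pmod 9$, negating (LN-2); and case (c) is the regime $uv\equiv 3\pmod 9$ in which $u\equiv v\pm1\pmod{27}$ is equivalent to $b^2\equiv a+1\pmod{27}$, negating (LN-3). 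When the hypothesis $v_3(a)<2$ or $v_3(b)<3$ of Lemma \ref{lma2.5} fails, i.e.\ $v_3(u)\ge 2$, I would first replace $\theta$ by $\theta/3$ to reach an admissible trinomial for the same field. Thus under (KM-4) the prime $3$ is not totally ramified in $F$, so $M/k$ is unramified at $3$ as well; combined with the previous paragraph, $M/k$ is everywhere unramified and $3\mid h(k)$.

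For the converse, given a quadratic field $k$ with $3\mid h(k)$ I would pick an unramified cyclic cubic extension $M/k$ inside the Hilbert class field; a short argument (a cyclic sextic over $\mathbb{Q}$ would force an unramified-over-$k$ cubic coming from a ramified cyclic cubic field over $\mathbb{Q}$, which is impossible) shows $M/\mathbb{Q}$ is an $S_3$-extension, so $M$ contains a non-Galois cubic field $F$. The task is then to produce a generator $\theta\in\mathcal{O}_F$ of trace $0$ whose norm is a perfect square, since the minimal polynomial of such a $\theta$ is exactly $x^3-uvx-u^2$ with $u^2=N_{F/\mathbb{Q}}(\theta)$ and $uv$ equal to minus the coefficient of $x$; one then reads off $u,v$ and verifies (KM-1)--(KM-4) by reversing the ramification analysis above. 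The main obstacle I anticipate is precisely this normalization: exhibiting a trace-zero generator of $F$ whose norm is a square, i.e.\ putting the cubic into the special trinomial shape. I would handle it via Kummer theory over $k(\zeta_3)$, writing $M(\zeta_3)=k(\zeta_3)(\sqrt[3]{\gamma})$ and using unramifiedness to constrain the valuations of $\gamma$, which is the delicate technical core of the Kishi--Miyake parametrization.
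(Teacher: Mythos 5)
The first thing you should know is that the paper contains no proof of this statement at all: Lemma \ref{lma3.1} is simply quoted from Kishi and Miyake \cite{KM00}, so there is no internal argument to compare yours against; the only meaningful benchmark is the original Kishi--Miyake paper. Measured against that, your \emph{forward} direction is essentially sound and follows the same lines as both the original proof and the arguments this paper itself runs in Section 2: realize $k=\mathbb{Q}(\sqrt{D_{f_{u,v}}})$ as the quadratic resolvent inside the $S_3$-splitting field $M$, use the inertia dictionary (a prime ramifies in $M/k$ iff it is totally ramified in the cubic field $F$), rule out total ramification at $p\neq 3$ via the pure-cube reduction argument (when $p\nmid u$) and the Newton polygon (when $p\mid u$, using (KM-1)), and translate (KM-4) into the failure of the Llorente--Nart conditions (LN-1)--(LN-3) of Lemma \ref{lma2.5} at $p=3$, after which Hilbert class field theory gives $3\mid h(k)$. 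The congruences you assert do check out (e.g. $3\mid v$, $uv\not\equiv 3\pmod 9$ and $u\equiv v\pm 1\pmod 9$ do force $u^4\equiv uv+1\pmod 9$), and your rescaling $\theta\mapsto\theta/3$ to cover $v_3(u)\geq 2$, where Lemma \ref{lma2.5} does not directly apply, can be made to work --- though the Newton-polygon argument you already use for $p\mid u$ disposes of that case more cleanly.

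The genuine gap is the converse, which in your write-up is a plan rather than a proof. The reduction is fine: an unramified cyclic cubic $M/k$ is automatically an $S_3$-field over $\mathbb{Q}$, since the nontrivial automorphism $\sigma$ of $k$ inverts ideal classes ($\mathfrak{a}\,\sigma(\mathfrak{a})$ is principal), so $\mathrm{Gal}(M/\mathbb{Q})\cong S_3$ and $M$ contains a cubic field $F$. But the heart of the matter --- producing a generator $\theta\in\mathcal{O}_F$ of trace zero whose norm is a perfect square, so that its minimal polynomial takes the shape $x^3-uvx-u^2$, and then verifying that the resulting $u,v$ satisfy (KM-1)--(KM-4), in particular reversing the congruence analysis at $3$ --- is exactly what you defer to ``Kummer theory over $k(\zeta_3)$.'' That normalization is the bulk of Kishi--Miyake's paper and cannot be waved through; as written, your argument establishes only the forward implication. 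For the purposes of this paper that happens to be the only direction used (Theorem \ref{thm3.1} invokes nothing else), but as a proof of the Lemma as stated, your proposal is incomplete.
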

We use this to prove:
\begin{theorem}\label{thm3.1}
Let $m$ be an odd positive integer. 
\begin{enumerate}
 \item[(I)] If $m\equiv 0 \pmod 3,$ then $3$ divides the class number of the field $\mathbb{Q}(\sqrt{-3(4m^3+1)})$.
 \item[(II)] If $m\equiv 4 \pmod {15},$ then $3$ divides the class number of the field $\mathbb{Q}(\sqrt{3(2m^{3n}-1})$ for any odd integer $n\geq 3$. 
 \end{enumerate}
\end{theorem}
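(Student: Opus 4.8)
The plan in both parts is to invoke Lemma \ref{lma3.1}, so the real work is to produce, for each target field, a pair of integers $u,v$ for which the cubic $f_{u,v}(x)=x^3-uvx-u^2$ has discriminant lying in the correct square class, and then to verify (KM-1)--(KM-4). The guiding identity is
\[
D_{f_{u,v}}=4(uv)^3-27u^4=u^3\bigl(4v^3-27u\bigr),
\]
and the whole difficulty is to steer $u^3(4v^3-27u)$ onto a perfect square times the prescribed radicand.

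For part (I) I would take $u=-1$ and $v=3m$, so that $f_{u,v}(x)=x^3+3mx-1$ and
\[
D_{f_{u,v}}=(-1)\bigl(4\cdot27m^3+27\bigr)=-27(4m^3+1)=9\cdot\bigl(-3(4m^3+1)\bigr),
\]
which gives $\mathbb{Q}(\sqrt{D_{f_{u,v}}})=\mathbb{Q}(\sqrt{-3(4m^3+1)})$. Here (KM-1) is immediate since $\gcd(-1,3m)=1$, (KM-2) follows because the only possible rational roots $\pm1$ yield $f_{u,v}(1)=3m$ and $f_{u,v}(-1)=-3m-2$, both nonzero, and (KM-3) is automatic as $D_{f_{u,v}}<0$. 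For (KM-4), $3\mid m$ forces $v=3m\equiv0$ and $uv=-3m\equiv0\pmod9$, so $uv\not\equiv3\pmod9$ and $u=-1\equiv v-1\pmod9$; thus case (b) applies and the claim follows.

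For part (II) the coefficient $2$ inside $2m^{3n}-1$ together with the outer factor $3$ dictates a less obvious choice; I would take $u=2$ and $v=3m^n$, so that $f_{u,v}(x)=x^3-6m^nx-4$ and
\[
D_{f_{u,v}}=8\bigl(4\cdot27m^{3n}-54\bigr)=8\cdot54\,(2m^{3n}-1)=12^2\cdot3(2m^{3n}-1),
\]
giving $\mathbb{Q}(\sqrt{D_{f_{u,v}}})=\mathbb{Q}(\sqrt{3(2m^{3n}-1)})$. Condition (KM-1) holds because $m$ is odd, whence $\gcd(2,3m^n)=1$. For (KM-2) I would reduce modulo $5$: since $m\equiv4\pmod{15}$ and $n$ is odd we get $m^n\equiv4\pmod5$, hence $f_{u,v}(x)\equiv x^3+x+1\pmod5$, which has no root in $\mathbb{F}_5$ and is therefore irreducible there and over $\mathbb{Q}$. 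For (KM-3), $m\equiv1\pmod3$ gives $2m^{3n}-1\equiv1\pmod3$, so $3(2m^{3n}-1)$ carries exactly one factor of $3$ and is not a square, and neither is $D_{f_{u,v}}$.

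I expect (KM-4) to be the crux, since the hypotheses constrain $m$ only modulo $15$ and leave $m\bmod9$ undetermined. The key observation that rescues it is that $m\equiv1\pmod3$ forces $m^n\equiv1\pmod3$, so $v=3m^n\equiv3\pmod9$ independently of $m\bmod9$; consequently $uv=6m^n\equiv6\not\equiv3\pmod9$ and $u=2\equiv v-1\pmod9$, so case (b) of (KM-4) holds uniformly. With all four conditions verified, Lemma \ref{lma3.1} yields $3\mid h(3(2m^{3n}-1))$, finishing the proof. (The hypothesis $n\ge3$ is not needed for the divisibility itself—only $n$ odd enters—and seems present to keep the family within the intended range.)
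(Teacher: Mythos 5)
Your proposal is correct and follows essentially the same route as the paper: both parts use Lemma \ref{lma3.1} with the identical choices $(u,v)=(-1,3m)$ for (I) and $(u,v)=(2,3m^n)$ for (II), with the same discriminant computations. Your verifications differ only in minor details (rational-root test instead of reduction mod $2$ for irreducibility in (I), and a $3$-adic valuation argument instead of the paper's $d\equiv 3\pmod 4$ argument for (KM-3) in (II)), and you supply the mod-$9$ check of condition (b) of (KM-4) that the paper leaves to the reader.
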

\begin{proof} We prove (I) and outline the proof of (II) as in most aspects these are very similar to the proof of (I).

Let us put $u=-1$ and $v=3m$ in (\ref{eq6}). Then 
\begin{equation*}
f_{-1, 3m}(X)=X^3+3mX-1
\end{equation*} 
and $D_{f_{-1, 3m}} = 9d$ with $d =-3(4m^3+1)$.

Clearly $u$ and $v$ are relatively prime, and $D_{f_{-1, 3m}}\neq\square$ in $\mathbb{Z}$. 
Now 
$$
f_{-1, 3m}(X)\equiv X^3-X-1 \pmod 2
$$ 
as $m$ is odd. Thus $f_{-1, 3m}(X)$ is irreducible over $\mathbb{Z}_2$ and therefore it is irreducible over $\mathbb{Q}$. 
Again $uv=-3m\equiv 0 \pmod 9$ (as $m\equiv 0 \pmod 3$). Furthermore $v-1=3m-1\equiv-1 \pmod 9 \equiv u \pmod 9$. Therefore $3$ divides class number of 
$\mathbb{Q}(\sqrt{-3(4m^3+1)})$ by invoking Lemma \ref{lma3.1}.

Finally to prove (II), we put $u=2$ and $v=3m^n$ in (\ref{eq6}). The irreduciblity of $f_{2, 3m^n}(X)$ 
follows from the 
irreducibility of $f_{2, 3m^n}(X)$ over
$\mathbb{Z}_5$. The condition (KM-3) holds since the discriminant of $f_{2,3m^n}(X)$ is $144d$ with $d=3(2m^{3n}-1)\equiv 3\pmod 4.$ Moreover we can show that the condition $(b)$ of (KM-4) holds.
\end{proof}

We conclude this section by providing another family of quadratic fields whose class number is divisible by $3$ by actually producing an element of order $3$.
\begin{theorem}\label{thm3.2}
 The class number of $\mathbb{Q}(\sqrt{1-2m^3})$ is divisible by $3$ for any odd integer $m>1.$
\end{theorem}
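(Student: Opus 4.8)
The plan is to produce a non-principal ideal whose class has order exactly $3$. Write $\delta=\sqrt{1-2m^3}$ and let $d$ be the square-free part of $1-2m^3$, so that $K=\mathbb{Q}(\sqrt d)$. Since $m>1$ is odd we have $1-2m^3<0$, and a short computation modulo $8$ shows $1-2m^3\equiv 3\pmod 4$; as $1-2m^3$ is odd the same holds for its square-free part, so $d\equiv 3\pmod 4$. Hence $K$ is imaginary quadratic, $\mathcal{O}_K=\mathbb{Z}[\sqrt d]$, the prime $2$ ramifies as $(2)=\mathfrak{p}_2^{2}$, and—provided we exclude the fields $\mathbb{Q}(i)$ and $\mathbb{Q}(\sqrt{-3})$—the unit group is $\{\pm1\}$.

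Next I would introduce the element
\[
\beta=(1-m^3)+\delta\in\mathcal{O}_K,\qquad N_{K/\mathbb{Q}}(\beta)=(1-m^3)^2-(1-2m^3)=m^{6},
\]
which is natural because $(1+\delta)^2=2\beta$. The key step is to factor the principal ideal $(\beta)$ and exhibit it as a cube. From $(1+\delta)(1-\delta)=1-\delta^{2}=2m^{3}$ one gets the ideal identity $(1+\delta)(1-\delta)=\mathfrak{p}_2^{2}\,(m)^{3}$, while $\gcd\big((1+\delta),(1-\delta)\big)=(2,\,1+\delta)=\mathfrak{p}_2$ since $m$ is odd. Thus the two conjugate ideals meet only in $\mathfrak{p}_2$, each to the first power, so writing $(1+\delta)=\mathfrak{p}_2\mathfrak{b}$ and $(1-\delta)=\mathfrak{p}_2\mathfrak{c}$ the factors $\mathfrak{b},\mathfrak{c}$ are coprime with $\mathfrak{b}\mathfrak{c}=(m)^{3}$; a coprime product equal to a cube forces $\mathfrak{b}=\mathfrak{a}^{3}$. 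Squaring $(1+\delta)=\mathfrak{p}_2\mathfrak{a}^{3}$ and dividing by $(2)=\mathfrak{p}_2^{2}$ then yields $(\beta)=\mathfrak{a}^{6}=\mathfrak{A}^{3}$ with $\mathfrak{A}=\mathfrak{a}^{2}$ of norm $m^{2}$.

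Consequently $[\mathfrak{A}]^{3}=1$ in the class group, so it remains to show $\mathfrak{A}$ is not principal; this will force the order of $[\mathfrak{A}]$ to be exactly $3$ and hence $3\mid h(d)$. Because the only units are $\pm1$, both of which are cubes in $\mathcal{O}_K$, the ideal $\mathfrak{A}$ is principal if and only if $\beta$ is a cube in $\mathcal{O}_K$. I would therefore assume $\beta=(x+y\sqrt d)^{3}$, expand to $\beta=(x^{3}+3dxy^{2})+(3x^{2}y+dy^{3})\sqrt d$, and compare coefficients together with the norm relation $x^{2}-dy^{2}=m^{2}$. In the square-free case $\beta=(1-m^3)+\sqrt d$, so the coefficient of $\sqrt d$ gives $y(3x^{2}+dy^{2})=1$, whence $y=\pm1$ and $3x^{2}+d=\pm1$, i.e. $3x^{2}=2m^{3}$ or $3x^{2}=2(m^{3}-1)$. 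The first is impossible because the two sides have different $2$-adic valuations ($m$ being odd); the second, combined with the constant-term equation $x(x^{2}+3d)=1-m^{3}$, forces $x=(3-3m^{3})/(7-16m^{3})$, which satisfies $0<|x|<1$ for $m>1$, a contradiction.

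The main obstacle is exactly this final non-cube verification, and two points require care beyond the clean square-free case. First, when $1-2m^3$ is not square-free one has $\delta=f\sqrt d$ with $f>1$, so comparing coefficients only yields $y\mid f$ instead of $y=\pm1$; the elementary argument must then be replaced by a valuation-and-size analysis of the system $\pm y(3x^{2}+dy^{2})=f$, $\pm x(x^{2}+3dy^{2})=1-m^{3}$, $x^{2}-dy^{2}=m^{2}$, using $f^{2}\mid 2m^{3}-1$ and $|d|y^{2}\le m^{2}$ to bound the possibilities. Second, one must genuinely exclude $d\in\{-1,-3\}$, where extra units would invalidate the equivalence ``$\mathfrak{A}$ principal $\iff\beta$ a cube''; this reduces to checking that $2m^{3}-1=f^{2}$ and $2m^{3}-1=3f^{2}$ have no solutions with $m>1$ odd. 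I expect the $2$-adic valuation argument and the size bounds to dispose of all remaining cases.
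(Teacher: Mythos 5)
Your ideal-theoretic setup is correct and runs parallel to the paper's own argument: both constructions produce an ideal whose cube is generated by an explicit element (the paper uses $2\alpha$ with $\alpha=1+\sqrt{1-2m^3}$, you use $\beta=(1-m^3)+\sqrt{1-2m^3}$, and these are tied by $2\beta=\alpha^2$), and both reduce the theorem to showing that this element is not a cube in $\mathcal{O}_K$. The genuine gap is that your non-cube verification is only carried out when $1-2m^3$ is square-free. For the general case you merely announce a ``valuation-and-size analysis'' of the system $\pm y(3x^2+dy^2)=f$, $\pm x(x^2+3dy^2)=1-m^3$, $x^2-dy^2=m^2$ and say you \emph{expect} it to close; that is a plan, not a proof, and the theorem is asserted for every odd $m>1$. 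Moreover, one ingredient of that plan cannot work as stated: to exclude $d=-1$ you propose to show that $2m^3-1=f^2$ has no solution with odd $m>1$, but after clearing denominators this is the Mordell equation $(2f)^2=(2m)^3-4$, which elementary size or $2$-adic bounds will not settle. (That exclusion is in fact unnecessary: every unit of $\mathbb{Z}[i]$ is a cube, e.g.\ $i=(-i)^3$, so your equivalence ``$\mathfrak{A}$ principal $\iff \beta$ a cube'' survives $d=-1$; and $d=-3$ cannot occur, since $2m^3-1\equiv 1,5\pmod 8$ for odd $m$ while $3f^2\equiv 3\pmod 8$.)

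The paper avoids all of this by testing the element whose rational part is the constant $2$: writing $2\alpha=(a+b\sqrt{d'})^3$ gives $a(a^2+3b^2d')=2$, hence $a\in\{\pm1,\pm2\}$, and four short cases finish the proof uniformly, square factor or not. Your own route can be closed just as uniformly, because you never combined the rational-part equation with the norm equation: from $x^2-dy^2=m^2$ one has $dy^2=x^2-m^2$, and substituting into $x(x^2+3dy^2)=1-m^3$ eliminates $y$ and $f$ entirely, giving $4x^3-3m^2x=1-m^3$, i.e.
\begin{equation*}
(x-m)(2x+m)^2=1-2m^3 .
\end{equation*}
Setting $s=2x+m$ (odd, since $m$ is odd) this becomes $(s+m)(s-2m)^2=2$, which forces $(s-2m)^2=1$ and $s+m=2$, hence $2-3m=\pm1$ and $m=1$, contradicting $m>1$. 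With this substitution your argument becomes a complete and arguably cleaner proof; as written, it proves the theorem only for square-free $1-2m^3$.
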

\begin{proof}
Let $d =1-2m^3$ and $K = \mathbb{Q}(\sqrt{d})$. Thus $d\equiv 3 \pmod 4$. 
Let $\alpha\in \mathcal{O}_K$ be of the form $\alpha=1+\sqrt{1-2m^3}$. Then $N_{K/\mathbb{Q}}(\alpha)=2m^3$. 

Suppose that $p_j$ is a prime factor of $m$. Then the Kronecker symbol $\left(\frac{d}{p_j}\right)=1$. Thus we can write
$$
(p_j)=\mathcal{P}_j\mathcal{P}'_j
$$ 
with distinct prime ideals $\mathcal{P}_j$ and $\mathcal{P}'_j$ in $\mathcal{O}_K$ which are conjugate to each other. Furthermore 
$$
(2)=\mathcal{P}^2  \hspace*{2mm}\mbox{with} \hspace*{2mm}\mathcal{P}=(2, 1+\sqrt{d}).
$$
We may express the prime ideal decomposition of $(\alpha)$ as
$$(\alpha) = \mathcal{P}\prod_j\mathcal{P}^{t_j},$$
because $\alpha$ is not divisible by any rational integers except $\pm 1$. 
Then $N_{K/\mathbb{Q}}((\alpha))=2\prod p_j^{t_j}$ with $p_j=N_{K/\mathbb{Q}}(P_j)$ and thus $3\mid t_j.$ Therefore 
$$\big(\mathcal{P}\prod_j\mathcal{P}_j^{t_j/3}\big)^3=(2)\mathcal{P}\prod_j \mathcal{P}_j^{t_j}=(2)(\alpha),$$
which is principal in $\mathcal{O}_K$. If $\langle\mathcal{I}\rangle$ is an ideal class containing $\mathcal{P}\prod_j \mathcal{P}_j^{t_j/3},$ then the order of $\langle\mathcal{I}\rangle$ is $3$ if $(\mathcal{P}\prod_j 
\mathcal{P}_j^{t_j/3})$ is not principal in $\mathcal{O}_K.$ To show $(\mathcal{P}\prod_j \mathcal{P}_j^{t_j/3})$ is not principal in $\mathcal{O}_K,$ it is sufficient to show $2\alpha$ is not a cube in $\mathcal{O}_K.$ Let $d'$ be the square-free part of $1-2m^3$ and denote
\begin{equation}\label{eqa}
1-2m^3=t^2d'\quad (t\in \mathbb{Z}).
\end{equation}
If $2\alpha=(a+b\sqrt{d'})^3$ for some $a,b\in \mathbb{Z},$ then we obtain 
\begin{equation}\label{eqb}
2=a^3+3ab^2d',
\end{equation}
\begin{equation}\label{eqc}
2|t|=3a^2b+b^3d'.
\end{equation}
From \eqref{eqb}, it holds $a|2$, that is, $a\in \{\pm1, \pm 2\}$.
Taking modulo $3$ in \eqref{eqb}, we see that $a\ne 1, -2$. When $a=-1$, we see from \eqref{eqb} and \eqref{eqc} that $d'=-1$ and $|t|=1$. Then by
\eqref{eqa}, we obtain $m=1$, which contradicts to the assumption
$m>1$. When $a=2$, we see from \eqref{eqb} and \eqref{eqc} that
$d'=-1$ and $|t|=11/2$. This is a contradiction.
This completes the proof.  
\end{proof}

\section{Numerical Examples}
In this section, we give some numerical examples corroborating  our results in \S 2 and \S 3. We compute the class numbers of each of the above families of fields for some small values of $d$ and list them in tables below. All the computations in this
paper were done using PARI/GP (version 2.7.6).
\vspace*{2mm}
\begin{table}[ht]
 \centering
\begin{tabular}{ | c | c | c | c |c|c|c|c|} 
 \hline
 $m$ & $  n$ &   $d=3(4m^{3n}-1)$ & $h(d)$ & $m$ &  $ n$ &   $d=3(4m^{3n}-1)$ & $h(d)$\\
\hline
 3 & 1 & 321 & 3& 3 & 2 & 8745 & 12\\
 3&3&2361953&36& 3&4&6377289&36\\
 9&1&8745&12& 9&2&6377289&36\\
 15&1&40497& 3& 21&1& 111129&6\\
 27&1&2361953&36& 33&1&431241&6\\
 39 & 1& 711825&3& 45 & 1& 1093497 & 3\\
 51&1&1591809&3& 57&1& 2222313&36\\
 63&1&30000561&9& 69&1&3942105&6\\
 75&1& 5062497&108& 81&1&6377289&36\\
 87&1&7902033&3& 93&1&9652281&6\\
 99&1&11643585&24& 105 & 1& 13891487 & 18\\
 111&1&16411569&6& 117&1&19219353&18\\
 123&1&22330401&12& 129&1&25760265&60\\
 135&1&29524497&18& 141 &1&33638649&60\\
 147&1&38118273&282& 153&1&42978921&9\\
 159&1&48236145&6&165&1&53905497&6\\
 \hline
\end{tabular}
\vspace*{2mm}
\caption{Numerical examples of Theorem \ref{thm2.1} for $k=1$ only.}
\end{table}

 \begin{table}
 \centering
\begin{tabular}{ | c | c | c | c |c|c|c|c|} 
 \hline
 m &   n &   $d=-(m^2n^2+4n)/3$ & $D=-(m^2n^2-4n)/3$ & $h(d)$ & $h(D)$\\
 \hline
 3&3&-31&-23&3&3\\
 3&15&-695&-655&24&12\\
3&21&-1351&-1295&24&36\\
3&33&-3311&-3223&72&30\\
3&39&-4615&-4511&36&84\\
3&51&-7871&-7735&120&48\\
9&3&-247&-239&6&15\\
9&15&-6095&-6055&84&36\\
9&21&-11935&-11879&72&150\\
9&33&-29447&-29359&132&72\\
9&39&-41119&-41015&120&180\\
9&51&-70295&-70159&252&168\\
15&3&-679&-671&18&30\\
15&15&-16895&-16855&96&84\\
15&21&-33103&-33047&60&150\\
21&3&-1327&-1319&15&45\\
21&15&-33095&-33055&240&72\\
21&21&-64855&-64799&120&222\\
27&3&-2191&-2183&30&42\\
27&15&-54695&-54655&216&156\\
\hline
\end{tabular}
\vspace*{2mm}
\caption{Numerical examples of Theorem \ref{thm2.2}.}
\end{table}
\vspace*{2mm}

\begin{table}
 \centering
\begin{tabular}{ | c | c | c | c |c|c|c|c|} 
 \hline
 m &   n & p & $d=-(3^mp^{2n}-2)$ & $D=-(3^mp^{2n}+4)$ & $h(d)$ & $h(D)$\\
 \hline
 3&1&3&-241&-247&12&6\\
 3&2&3&-2185&-2191&24&30\\
 3&1&7&-1321&-1327&24&15\\
3&2&7&-64825&-64831&24&162\\
 3&3&3&-19681&-19687&84&81\\
 5&1&3&-2185&-2191&24&30\\
 5&2&3&-19681&-19687&84&81\\
 5&1&5&-6073&-6079&24&57\\
 5&2&5&-151873&-151879&120&300\\
 7&1&3&-19681&-19687&84&81\\
 7&1&5&-107161&-107167&216&108\\
 \hline
\end{tabular}
\vspace*{2mm}
\caption{Numerical examples of Theorem \ref{thm2.3}.}
\end{table}
\vspace*{2mm}
\begin{table}
 \centering
\begin{tabular}{ | c | c | c | c |c|c|} 
 \hline
 $a$ & $b$ & $n$ & $d=3(a^{3n}-b^{2n})$ & $h(d)$\\
\hline
19&6&3&968062953369&6\\
19&21&3&967805794974&648\\
19&36&3&961532746329&24\\
19&51&3&915274229934&48\\
19&66&3&720101243289&12\\
19&81&3&120774483894&24\\
19&96&3&-1380210275751&1388160\\
19&111&3&-4643180563146&1951488\\
19&126&3&-11036449330791&4263624\\
19&141&3&-22606080831186&3780672\\
49&306&3&2422323582800979&192\\
\hline
\end{tabular}
\caption{Numerical examples of Theorem \ref{thm2.4}.}
\end{table}

\begin{table}
 \centering
\begin{tabular}{ | c | c | c | c |c|c|} 
 \hline
 $a$ & $b$ & $n$ & $d=3(4a^{3n}-b^{2n})$ & $h(d)$\\
\hline
1&3&2&-331&3\\
1&3&3&-725&6\\
1&3&4&-19671&84\\
1&9&2&-19671&84\\
1&9&3&-531437&480\\
1&15&2&-151863&324\\
7&3&2&1411545&12\\
7&15&2&1259913&6\\
7&27&2&-60845&192\\
13&9&2&57902025&72\\
13&15&2&57769833&48\\
13&21&2&57338265&96\\
\hline
\end{tabular}
\caption{Numerical examples of Theorem \ref{thm2.5}.}
\end{table}

\begin{table}
 \centering
\begin{tabular}{ | c | c | c | c |c|c|} 
 \hline
 $m$ & $d=-3(4m^3+1)$ & $h(d)$& $m$ & $d=-3(4m^3+1)$ & $h(d)$\\
\hline
3&-327&12 & 9&-8751&72\\
15&-40503&96 & 21&-111135&240\\
27&-236199&504& 33&-431247&360\\
39&-711831&648&45&-1093503&540\\
51&-1591815&780&57&-2222319&984\\
63&-3000567&1152&69&-3942111&2568\\
75&-5062503&1800&81&-6377295&1296\\
87&-7902039&2772&93&-9652287&1452\\
99&-11643591&2160&105&-13891503&2448\\
\hline
\end{tabular}
\caption{Numerical examples of Theorem \ref{thm3.1} (I).}
\end{table}

\begin{table}[ht]
 \centering
\begin{tabular}{ | c | c | c | c |c|c|} 
 \hline
 m & $d=1-2m^3$ & $h(d)$ & m & $d=1-2m^3$ & $h(d)$\\
\hline
 3&-53&6&5&-249&12\\
7&-685&12&9&-1457&24\\
11&-2661&48&13&-4393&24\\
15&-6749&84&17&-9825&12\\
19&-13717&48&21&-18521&228\\
23&-24333&72&25&-31249&96\\
27&-39365&180&29&-48777&96\\
31&-59581&126&33&-71873&240\\
35&-85749&336&37&-101305&144\\
39&-118637&216&41&-137841&264\\
43&-159013&162&45&-182249&408\\
47&-207645&288&49&-235297&264\\
51&--265301&684&53&-297753&228\\
55&-332749&318&57&-370385&360\\
59&-410757&336&61&-453961&420\\
63&-500093&78&65&-549249&624\\
67&-601525&66&69&-657017&660\\
71&-715821&672&73&-778033&324\\
75&-843749&1128&77&-913065&912\\
\hline
\end{tabular}
\caption{Numerical examples of Theorem \ref{thm3.2}.} 
\end{table}

\section*{Acknowledgements}
A. Hoque is supported by SERB N-PDF (PDF/2017/001958), Govt.
of India. The authors are indebted to the anonymous referee for his/her valuable suggestions which has helped improving the presentation of this manuscript.

\label{'ubl'}  

\begin{thebibliography}{25}

\bibitem{CM03} K. Chakraborty and M. R. Murty, On the number of real quadratic fields with class number divisible by $3$, {\it Proc. Amer. Math. Soc.}, {\bf 131} (2003), 41--44.

\bibitem{HO68} T. Honda, On real quadratic fields whose class numbers are multiples of $3$, {\it J. Reine Angew. Math.}, {\bf 233} (1968), 101--102.

\bibitem{HS162} A. Hoque and H. K. Saikia, A note on quadratic fields whose class numbers are divisible by $3$, {\it SeMA J.}, {\bf 73} (2016), 1--5.

\bibitem{HS151} A. Hoque and H. K. Saikia, A family of imaginary quadratic fields whose class numbers are multiples of three, {\it J. Taibah Univ. Sci.}, {\bf 9} (2015), 399--402.

\bibitem{IC03} H. Ichimura, Note on the class numbers of certain real quadratic fields, {\it Abh. Math. Sem. Univ. Hamburg}, {\bf 73} (2003), 281--288.

\bibitem{KI10} Y. Kishi, On the ideal class group of certain quadratic fields, {\it Glasgow Math. J.}, {\bf 52} (2010), 575--581.

\bibitem{KI00} Y. Kishi, A constructive approach to Spiegelung relations between 3-Ranks of absolute ideal class groups and congruent ones modulo $(3)^2$ in quadratic fields, {\it J. Number Theory}, {\bf 83} (2000), 1--49.

\bibitem{KI98} K. Kishi, A criterion for a certain type of imaginary quadratic fields to have $3$-ranks of the ideal class groups greater than one, {\it Proc. Japan Acad.}, {\bf 74}, Ser. A (1998), 93--97.

\bibitem{KM00} Y. Kishi and K. Miyake, Parametrization of the quadratic fields whose class numbers are divisible by three, {\it J. Number Theory}, {\bf 80} (2000), 209--217.

\bibitem{LN83} P. Llorente and E. Nart, Effective determination of the decomposition of the rational prime in a cubic field, {\it Proc. Amer. Math. Soc.}, {\bf 87} (1983), 579--585.

\bibitem{NA22} T. Nagell, \"{U}ber die Klassenzahl imagin\"{a}r quadratischer, Z\"{a}hlk\"{o}rper, {\it Abh. Math. Sem. Univ. Hamburg}, {\bf 1} (1922), 140--150.

\bibitem{WE73} P. J. Weinberger, Real Quadratic fields with Class number divisible by $n$, {\it J. Number theory}, {\bf 5} (1973), 237--241.

\bibitem{YA70} Y. Yamamoto, On unramified Galois extensions of quadratic number fields, {\it Osaka J. Math.}, {\bf 7} (1970), 57--76.

\end{thebibliography}
\end{document}